\newcommand{\nc}{\newcommand}
\nc{\rnc}{\renewcommand}
\nc{\nn}{\nonumber}
\nc{\der}{{\partial}}
\rnc{\Im}{{\rm{Im}\,}}
\rnc{\Re}{{\rm{Re}\,}}
\nc{\db}{\displaybreak[0]\\}
\nc{\bra}{\langle}
\nc{\ket}{\rangle}
\nc{\bs}{\boldsymbol}
\DeclareMathOperator{\Tr}{Tr}
\DeclareMathOperator{\End}{End}
\newtheorem{theorem}{Theorem}[section]
\newtheorem{lemma}[theorem]{Lemma}
\newtheorem{proposition}[theorem]{Proposition}
\theoremstyle{definition}
\newtheorem{definition}[theorem]{Definition}
\numberwithin{equation}{section}
\numberwithin{equation}{section}
\begin{document}%
%
\title{Combinatorial properties of symmetric polynomials
from \\
integrable vertex models in finite lattice}

\author{
Kohei Motegi \thanks{E-mail: kmoteg0@kaiyodai.ac.jp}
\\\\
{\it Faculty of Marine Technology, Tokyo University of Marine Science and Technology,}\\
 {\it Etchujima 2-1-6, Koto-Ku, Tokyo, 135-8533, Japan} \\
\\\\
\\
}

\date{\today}

\maketitle

\begin{abstract}
We introduce and study several combinatorial properties
of a class of symmetric polynomials
from the point of view of integrable vertex models
in finite lattice.
We introduce the $L$-operator related with the
$U_q(sl_2)$ $R$-matrix, and construct the wavefunctions
and their duals.
We prove the exact correspondence between the wavefunctions
and symmetric polynomials which is a quantum group deformation
of the Grothendieck polynomials.
This is proved
by combining the matrix product method
and an analysis on the domain wall boundary partition functions.
As applications of the correspondence between the wavefunctions
and symmetric polynomials, we derive several properties
of the symmetric polynomials
such as the determinant pairing formulas
and the branching formulas by analyzing the
domain wall boundary partition functions and the matrix elements
of the $B$-operators.

\end{abstract}

\section{Introduction}
Integrable lattice models \cite{Bethe,Baxter} are playing important roles
these days not only in mathematical physics
but also in various areas of mathematics, especially
representation theory and combinatorics.
Various mathematical structues are discovered
by investigating integrable lattice models.
The most notable one is the quantum group \cite{Dr,J},
which came out of the quantum inverse scattering method \cite{FST,KBI},
a method to analyze physical properties of
quantum integrable models.

From the point of view of statistical physics,
the most important objects are partition functions,
which are objects constructed from the $L$-operators.
Among the various types of partition functions,
the most basic ones for physics are the wavefunctions.
This is because the wavefunctions become eigenvectors
of the corresponding one-dimensional quantum integrable models
under the Bethe ansatz equation.
In recent years, wavefunctions turned out be interesting
not only because of their roles in physics but rather from
the point of view of mathematics.
From the point of view of representation theory,
the commutativity of the $B$-operators in the quantum inverse scattering method
implies that the wavefunctions are some symmetric polynomials,
and this fact offers us a way
to study symmetric polynomials from the point of view
of quantum inverse scattering method.
For a particular type of
an integrable five-vertex model \cite{MS,GK2} and an integrable boson model
\cite{MS2},
the wavefunctions are nothing but the Grothendieck polynomials
\cite{LS,FK,Buch,IN,IS,Mc,KirillovSigma},
which are polynomial representatives of the
structure sheaves of the Schubert cells
in the $K$-theory of the Grassmannian variety.
This fact allowed us to extract various properties
of the Grothendieck polynomials.
This is just an example, and
there are increasing interests on the studies of symmetric polynomials
from the point of view of integrable lattice models today
(see
\cite{MS3,BBF,BMN,MoFelderhof,Bogo,SU,Ts,
KS,Bor,BP1,BP2,GK,GK2,BWZ,BW,WZ,DP,MSW1,vanDE,Iv,BBCG,Tabony,HK}
for examples on these subjects)
.
One of the interesting topics is the study on symmetric polynomials
by investigating integrable boson models
in half-infinite lattice initiated in \cite{Bor},
which resembles the $q$-vertex operator approach.
Due to the imposition of the infinite boundary condition,
great simplifications occur and several beautiful
formulas are displayed (see \cite{BP1,BP2,WZ,DP} for further works
and also for an approach from the
coordinate Bethe ansatz approach \cite{Tak,Tak2} whose connections with
the quantum inverse scattering method seems to not be revealed
up to now).

In this paper,
we focus on integrable six-vertex models
in finite lattice, and study combinatorial properties
of symmetric polynomials by using the quantum inverse scattering method.
We first introduce the most general
$L$-operator of an integrable six-vertex model
satisfying the $RLL$ relation with the $R$-matrix
given by the $U_q(sl_2)$ $R$-matrix.
Besides the spectral parameter and the quantum group parameter,
the $L$-operator has other parameters $a,b,c,d,e,f$ under
the constraints \eqref{constraints2}.
One next defines four types of wavefunctions constructed from the
$B$- and $C$-operators, particle and hole states.
From the properties that the $B$-operators ($C$-operators)
commute with each other, the wavefunctions are symmetric polynomials
of the spectral parameters in principle.
We determine the exact correspondence between
the wavefunctions and the symmetric polynomials
by combining the matrix product method \cite{GMmat,KM}
and an analysis on the domain wall boundary partition function
\cite{Ko,Iz,PRS,ICK}.
We will see that the symmetric polynomials is a quantum group deformation
of the Grothendieck polynomials by showing that if one
takes the quantum group parameter to zero,
the symmetric polynomials becomes essentially the Grothendieck polynomials.
The method combining the matrix product method
and the domain wall boundary partition function
was used in \cite{MS} to study the relation between the 
wavefunctions of the five-vertex model and the Grothendieck polynomials,
and the wavefunctions of the Felderhof model and the Schur polynomials
in \cite{MSW1,MoFelderhof}. 
We remark that similar results for one of the correspondences
between the wavefunctions and the symmetric polynomials
\eqref{generalizedwavefunction} in Theorem \ref{theoremwavefunctions}
are obtained for the $q$-boson models with fewer free parameters
(special cases of the parameters $t,a,b,c,d,e,f$ under
the constraints \eqref{constraints2})
in \cite{MS2,Bogo,Ts,Bor,BP1,BP2,WZ}.

Next, having established the correspondence
between the wavefunctions and the symmetric polynomials,
we study several combinatorial properties of the symmetric polynomials.
First, we prove pairing formulas between the symmetric polynomials
by using the domain wall boundary partition function.
We derive the determinant pairing formula by
the taking the homogeneous limit of the
Izerign-Korepin determinant form of
the inhomogeneous domain wall boundary partition function.
For the case of the Felderhof model, the idea of using the
domain wall boundary partition function
was used to derive dual Cauchy formula for the (factorial)
Schur polynomials \cite{BBF,BMN}
See also \cite{WZ} for example by using it in a different way.
We also derive the branching formulas for the
four symmetric polynomials introduced in this paper
by analyzing the matrix elements of the $B$- and $C$-operators.

This paper is organized as follows.
In the next section, we introduce the $L$-operator
of an integrable six-vertex model.
In section 3, we introduce four types of symmetric polynomials,
and show  the correspondence between the wavefunctions
of the six-vertex models and
the symmetric polynomials. We also show the degeneration
from the symmetric polynomials to the Grothendieck polynomials
by taking the quantum group parameter to zero.
In sections 4 and 5, we give a proof
for the correspondence
by using the matrix product method and
the domain wall boundary partition function.
The next two sections are applications of the correspondence.
In section 6, we derive pairing formulas between the symmetric polynomials
by using the determinant form of
the homogeneous domain wall boundary partition functions.
In section 7,
we study the branching formulas of the symmetric polynomials
introduced in this paper by analyzing the matrix elements 
of the $B$- and $C$-operators.
Section 8 is devoted to conclusion.

\section{Integrable six-vertex models}
We introduce the $L$-operator of the six-vertex model
whose wavefunctions will be investigated in this paper.
We first start from the $R$-matrix $R(u)$,
which is the most fundamental object in integrable lattice models,
acting on the tensor product $W_a \otimes W_b$ of the
representation spaces $W_a$ and satisfying the Yang-Baxter relation
\begin{align}
R_{ab}(u_1/u_2)R_{ac}(u_1)R_{bc}(u_2)
=R_{bc}(u_2)R_{ac}(u_1)R_{ab}(u_1/u_2)
\in \mathrm{End}(W_a \otimes W_b \otimes W_c). \label{YBE}
\end{align}
We take $W_a$ as the complex two-dimensional space,
and the $R$-matrix as the following one which is
nowadays called as the $U_q(sl_2)$ $R$-matrix \cite{Dr,J}
\begin{eqnarray}
R_{ab}(u)=\left( 
\begin{array}{cccc}
u-t & 0 & 0 & 0 \\
0 & t(u-1) & (1-t)u & 0 \\
0 & 1-t & u-1 & 0 \\
0 & 0 & 0 & u-t
\end{array}
\right). \label{XXZRmatrix}
\end{eqnarray}
Here, $t$ is the quantum group parameter,
and $u$ is called as spectral parameters.
We denote the orthonormal basis of the space $W_a$
as $\{|0 \rangle_a, |1 \rangle_a \}$
and its dual orthonormal basis as
$\{{}_a \langle 0|, {}_a \langle 1|\}$.
When one denotes the matrix elements
of the $R$-matrix as
$
{}_a \langle \gamma | {}_b \langle \delta | R_{a b}(u)
|\alpha \rangle_a | \beta \rangle_b=[R(u)]_{\alpha \beta}^{\gamma \delta}
$,
The matrix elements of the $R$-matrix \eqref{XXZRmatrix}
is explicitly written as
\begin{align}
{}_a \langle 0| {}_b \langle 0 | R_{a b}(u)
|0 \rangle_a | 0 \rangle_b&=u-t, \\
{}_a \langle 0| {}_b \langle 1 | R_{a b}(u)
|0 \rangle_a | 1 \rangle_b&=t(u-1), \\
{}_a \langle 0| {}_b \langle 1 | R_{a b}(u)
|1 \rangle_a | 0 \rangle_b&=(1-t)u, \\
{}_a \langle 1| {}_b \langle 0 | R_{a b}(u)
|0 \rangle_a |1 \rangle_b&=1-t, \\
{}_a \langle 1| {}_b \langle 0 | R_{a b}(u)
|1 \rangle_a | 0 \rangle_b&=u-1, \\
{}_a \langle 1| {}_b \langle 1 | R_{a b}(u)
|1 \rangle_a | 1 \rangle_b&=u-t.
\end{align}
One important property for the $R$-matrix of the six-vertex model is
that if $\alpha$, $\beta$, $\gamma$ and $\delta$
does not satisfy $\alpha+\beta=\gamma+\delta$,
the corresponding matrix elements become zero
$[R(u)]_{\alpha \beta}^{\gamma \delta}=0$.
This property is called as the ice rule, or the total spin conservation law.

For later convenience,
here we define Pauli spin operators
$\sigma^+$ and $\sigma^-$ as operators acting on the (dual) orthonomal
basis as
\begin{align}
&\sigma^+|1 \rangle=|0 \rangle, \ 
\sigma^+|0 \rangle=0, \ 
\langle 0|\sigma^+=\langle 1|, \
\langle 1|\sigma^+=0, 
\\
&\sigma^-|0 \rangle=|1 \rangle, \
\sigma^-|1 \rangle=0, \
\langle 1|\sigma^-=\langle 0|, \
\langle 0|\sigma^-=0.
\end{align}

The Yang-Baxter relation \eqref{YBE} is
an $RRR$-type Yang-Baxter relation, i.e.,
all of the operators in the relation are identical.
From the point of view of quantum integrability,
one can introduce the following $RLL$-type Yang-Baxter relation
\begin{align}
R_{ab}(u_1/u_2)L_{aj}(u_1)L_{bj}(u_2)
=L_{bj}(u_2)L_{aj}(u_1)R_{ab}(u_1/u_2)
\in \mathrm{End}(W_a \otimes W_b \otimes V_j). \label{RLL}
\end{align}
The physical model constructed from the $L$-operator $L(u)$ is
also quantum integrable in the sense that the transfer matrix constructed
from the $L$-operators form a commutative family.
The $L$-operators act on the tensor product $W_a \otimes V_j$.
From the correspondence between two-dimensional integrable lattice models
and one-dimensional quantum integrable models,
the space $W$ is called as the auxiliary space while
$V$ is referred to as the quantum space.
The representation space $V$ does not necessarily have to be
the same with the space $W$.
One typical example is to take $V$ as an infinite-dimensional boson Fock space.
However, we take  $V$ as the two-dimensional complex vector space
in this paper, the same with $W$.

We take the $R$-matrix $R(u)$ as the
$U_q(sl_2)$ $R$-matrix \eqref{XXZRmatrix}.
Then one can regard the $RLL$ relation \eqref{RLL}
as an equation with the $L$-operator unknown.
By assuming the ice rule for the $L$-operator
and solving the $RLL$ equation,
one finds the following full solution of the
$L$-operator \cite{MS2}
\begin{eqnarray}
L_{aj}(u)=\left( 
\begin{array}{cccc}
au+b & 0 & 0 & 0 \\
0 & atu+b & (1-t)cu & 0 \\
0 & (1-t)d & eu+f & 0 \\
0 & 0 & 0 & eu+tf
\end{array}
\right). \label{generalizedloperator}
\end{eqnarray}
Here, the parameters
$a$, $b$, $c$, $d$, $e$ and $f$ are constant parameters
(do not depend on the spectral parameter $u$)
and must obey the following relations
\begin{align}
(1-t)cd+af-be=0, \ (t^2-t)cd+t^2 af-be=0. \label{constraints}
\end{align}
If one assumes $t \neq 1$, the relations \eqref{constraints}
further reduce to
\begin{align}
cd+af=0, \ tcd+be=0. \label{constraints2}
\end{align}
In this paper, we consider the
integrable six-vertex model
of the $L$-operator \eqref{generalizedloperator}
under the constraints of the parameters \eqref{constraints2}.

By introducing the orthonormal basis
$\{|0 \rangle_j, |1 \rangle_j \}$ of
$V_j$
and the dual orthonormal basis
$\{{}_j \langle 0|, {}_j \langle 1|\}$,
the matrix elements of the
$L$-operator \eqref{generalizedloperator}
$
{}_a \langle \gamma| {}_j \langle \delta | L_{a j}(u)
|\alpha \rangle_a | \beta \rangle_j=[L(u)]_{\alpha \beta}^{\gamma \delta}
$
is explicitly given by (see Figure \ref{pictureloperator}
for a pictorial desciption)
\begin{align}
{}_a \langle 0| {}_j \langle 0 | L_{a j}(u)
|0 \rangle_a | 0 \rangle_j&=au+b, \\
{}_a \langle 0| {}_j \langle 1 | L_{a j}(u)
|0 \rangle_a | 1 \rangle_j&=atu+b, \\
{}_a \langle 0| {}_j \langle 1 | L_{a j}(u)
|1 \rangle_a | 0 \rangle_j&=(1-t)cu, \\
{}_a \langle 1| {}_j \langle 0 | L_{a j}(u)
|0 \rangle_a |1 \rangle_j&=(1-t)d, \\
{}_a \langle 1| {}_j \langle 0 | L_{a j}(u)
|1 \rangle_a | 0 \rangle_j&=eu+f, \\
{}_a \langle 1| {}_j \langle 1 | L_{a j}(u)
|1 \rangle_a | 1 \rangle_j&=eu+ft.
\end{align}
In the next section, we introduce a class of partition functions
called the wavefunctions, which are constructed from the $L$-operators.
Then we state a theorem on the correspondence
between the wavefunctions of the $L$-operators
\eqref{generalizedloperator}, \eqref{constraints2}
and the symmetric polynomials.

\begin{figure}[ht]
\includegraphics[width=15cm]{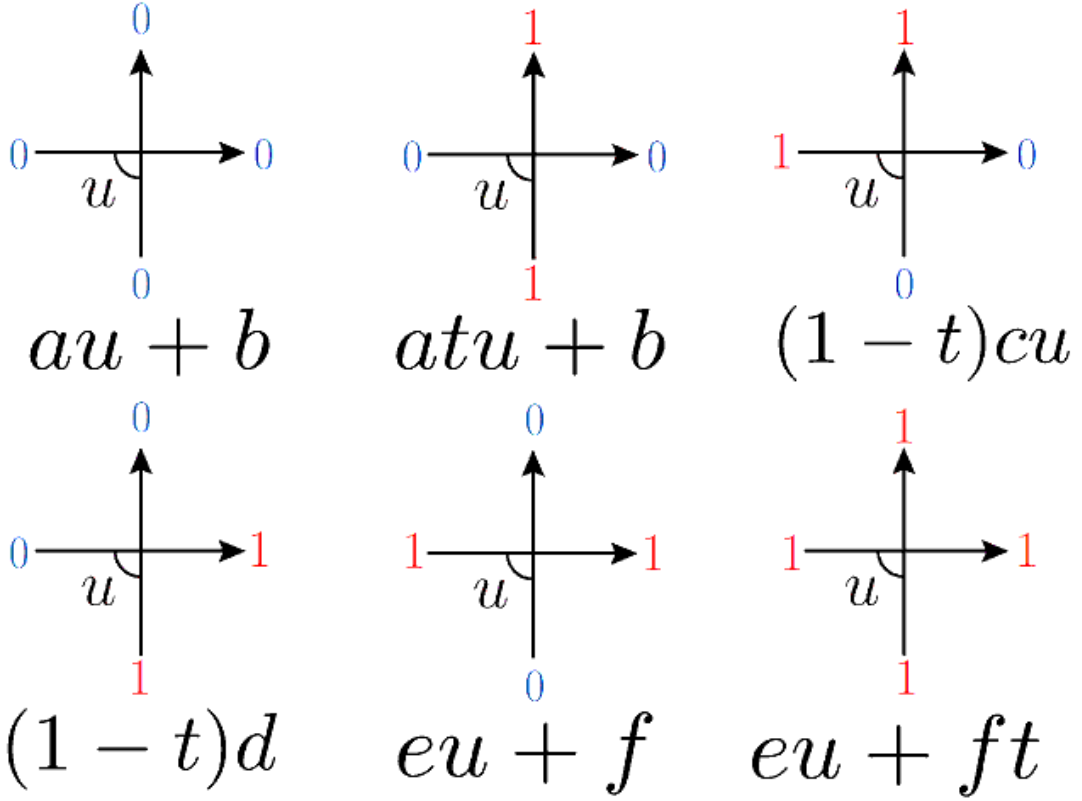}
\caption{A pictorial description of the
$L$-operator \eqref{generalizedloperator}, \eqref{constraints2}.
For each configuration, a particular weight is assigned.}
\label{pictureloperator}
\end{figure}

\section{Wavefunctions and symmetric polynoimals}
Here we construct global objects from the
local $L$-operators by using the terminology
of the quantum inverse scattering method
\cite{FST,Baxter,KBI}.
We first define the monodromy matrix $T_a(u)$
from the $L$-operator as
\begin{align}
T_{a}(u)=L_{a M}(u) \cdots L_{a 1}(u)
&=
\begin{pmatrix}
A(u) & B(u)  \\
C(u) & D(u)
\end{pmatrix}_{a} \in \mathrm{End}(W_a \otimes V_1 \otimes \cdots \otimes V_M).
\label{monodromy}
\end{align}
The matrix elements of the monodromy matrix
(see Figure \ref{pictureabcdoperators} for a pictorial description)
\begin{align}
A(u)=_a \langle 0 |T_a(u)| 0 \rangle_a, \\
B(u)=_a \langle 0 |T_a(u)| 1 \rangle_a, \\
C(u)=_a \langle 1 |T_a(u)| 0 \rangle_a, \\
D(u)=_a \langle 1 |T_a(u)| 1 \rangle_a,
\end{align}
are $2^M \times 2^M$ matrices
acting on the tensor product of the quantum spaces
$V_1\otimes \dots \otimes V_M$.

\begin{figure}[ht]
\includegraphics[width=15cm]{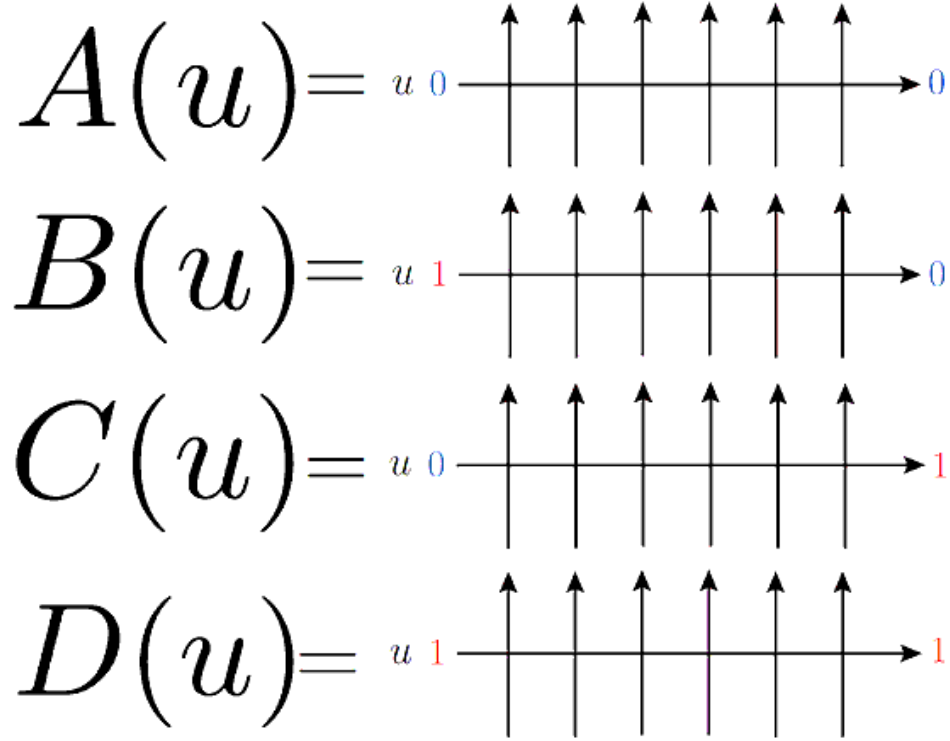}
\caption{A pictorial description of the
$ABCD$-operators which are matrix elements of the monodromy matrix
\eqref{monodromy}.}
\label{pictureabcdoperators}
\end{figure}

The vector 
$|0 \rangle$, which forms one of the orthonormal basis
of $V$, can be interpreted as a state with no particle
(hole state). The other vector $|1 \rangle$
is interpreted as a particle-occupied state.
From the ice rule of the $L$-operator,
one easily finds that a single $B$-operator plays the role of
creating a particle in the quantum space.
Likewise, a single $C$-operator annihilates a particle in the quantum space.
To create $N$-particle, $N$-hole states and their duals,
we introduce the following vacuum and particle-occupied states
\begin{align}
|\Omega \rangle&:=| 0^{M} \rangle:=|0\ket_1\
\otimes \dots \otimes |0\ket_M, \\
\langle \Omega |&:=\langle 0^{M}|:=
{}_1\bra 0|\otimes\dots \otimes{}_M\bra 0|, \\
\langle 1 \cdots M|&:=\langle 1^{M}|:=
{}_1\bra 1|\otimes\dots \otimes{}_M\bra 1|, \\
|1 \cdots M \rangle&:=| 1^{M} \rangle:=|1\ket_1\
\otimes \dots \otimes |1\ket_M.
\end{align}
We call $|\Omega \rangle$ ($\langle \Omega|$)
as the (dual) vacuum state since
there are no particles,
and $|1 \cdots M \rangle$
($\langle 1 \cdots M|$) as the (dual) particle-occupied state
since all the sites are filled with particles.

One can define an $N$-particle state, a dual $N$-particle state,
an $N$-hole state and a dual $N$-hole state
by acting $N$ $B$- and $C$-operators
on the vacuum state, particle-occupied state and their duals
\begin{align}
|\psi(\{u\}_N) \rangle&=B(u_N) \cdots B(u_1)| \Omega \rangle,
\label{statevector} \\
\langle \psi(\{u\}_N)|&=\langle \Omega |C(u_1) \cdots C(u_N),
\label{statevectortwo} \\
\langle \phi(\{ u \}_N)|&=\langle 1 \cdots M |B(u_1) \cdots B(u_N),
\label{statevectorthree} \\
|\phi(\{u\}_N) \rangle&=C(u_N) \cdots C(u_1)|1 \cdots M \rangle.
\label{statevectorfour}
\end{align}
For example, $|\psi(\{u\}_N) \rangle$ \eqref{statevector}
is an $N$-particle state since $N$ $B$-operators
are acting on the vacuum state with no particles.
The states \eqref{statevector}, \eqref{statevectortwo},
\eqref{statevectorthree} and \eqref{statevectorfour}
are sometimes called as off-shell Bethe vectors.
This is because if
one imposes a set of constraints (Bethe ansatz equation)
on the spectral parameters $u_j$ $(j=1,\dots,N)$,
the states \eqref{statevector},
\eqref{statevectortwo},
\eqref{statevectorthree} and \eqref{statevectorfour}
become eigenvectors
of the transfer matrix
$t(u):=\mathrm{Tr}_a T_a(u)=A(u)+D(u)$ which is a generating function
of conserved quantities such as the Hamiltonian.

To define wavefunctions, one also needs to introduce
vectors which label the configuration of particles.
Namely, we define the following particle state and its dual
\begin{align}
|x_1 \cdots x_N \ket
&=\prod_{j=1}^N \sigma^-_{x_j}
(|0 \rangle_1 \otimes \cdots \otimes |0 \rangle_M),
\label{particleconfiguration} \\
\langle x_1 \cdots x_N|
&=(_1 \langle 0| \otimes \cdots \otimes {}_M \langle 0|)
\prod_{j=1}^N \sigma^+_{x_j}, \label{dualparticleconfiguration}
\end{align}
which are states labelling the configurations
of particles
$1 \le x_1 < x_2 < \cdots < x_N \le M$.
Likewise, we introduce vectors describing
hole configurations
$1 \le \overline{x_1}< \overline{x_2} < \cdots < \overline{x_N} \le M$
\begin{align}
|\overline{x_1} \cdots \overline{x_N} \ket
&=\prod_{j=1}^N \sigma^+_{x_j}
(|1 \rangle_1 \otimes \cdots \otimes |1 \rangle_M),
\label{holeconfiguration} \\
\langle \overline{x_1} \cdots \overline{x_N}|
&=(_1 \langle 1| \otimes \cdots \otimes {}_M \langle 1|)
\prod_{j=1}^N \sigma^-_{x_j}. \label{dualholeconfiguration}
\end{align}

\begin{figure}[ht]
\includegraphics[width=15cm]{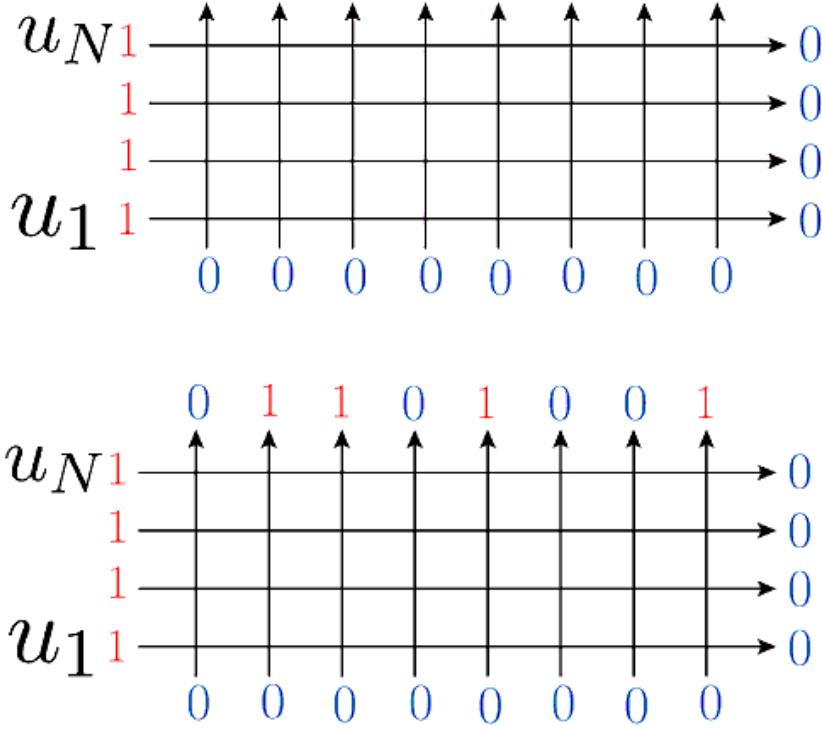}
\caption{Pictorial descriptions of an
$N$-particle state $B(u_4)B(u_3)B(u_2)B(u_1)|\Omega \rangle$
(top) and a wavefunction
$\langle 2,3,5,8|B(u_4)B(u_3)B(u_2)B(u_1)|\Omega \rangle$
(bottom).}
\label{picturewavefunctions}
\end{figure}

Now we are in a position to define the wavefunctions.
The wavefunctions are defined as the overlap between the
(dual) $N$-particle ($N$-hole) states
\eqref{statevector},
\eqref{statevectortwo},
\eqref{statevectorthree}, \eqref{statevectorfour}
and the (dual) particle (hole) states
\eqref{particleconfiguration}, \eqref{dualparticleconfiguration},
\eqref{holeconfiguration}, \eqref{dualholeconfiguration}
(see Figure \ref{picturewavefunctions} for graphical descriptions
of the $N$-particle states and the wavefunctions)
\begin{align}
\bra x_1 \cdots x_N | \psi(\{u\}_N) \ket
&=\bra x_1 \cdots x_N |B(u_N) \cdots B(u_1)| \Omega \rangle,
\label{wavefunction} \\
\langle \psi(\{u\}_N)|x_1 \cdots x_N \rangle&
=\langle \Omega |C(u_1) \cdots C(u_N)|x_1 \cdots x_N \rangle,
\label{wavefunctiontwo} \\
\langle \phi(\{ u \}_N)| \overline{x_1} \cdots \overline{x_N}
\rangle&=\langle 1 \cdots M |B(u_1) \cdots B(u_N)| \overline{x_1} \cdots \overline{x_N} \rangle,
\label{wavefunctionthree} \\
\langle \overline{x_1} \cdots \overline{x_N}|\phi(\{u\}_N) \rangle&
=\langle \overline{x_1} \cdots \overline{x_N}|
C(u_N) \cdots C(u_1)|1 \cdots M \rangle.
\label{wavefunctionfour}
\end{align}
Note that if one fixes a particular $L$-operator,
the corresponding wavefunctions are fixed.
Before stating the theorem on the exact expressions of the wavefunctions,
we first introduce four types of symmetric polynomials.

\begin{definition}
For a particle configuration $x=(x_1,x_2,\dots,x_N) \
(1 \le x_1 < x_2 < \cdots < x_N \le M)$,
we define symmetric polynomials
$G_{x}(u_1,\dots,u_N)$ and $\overline{G}_{x}(u_1,\dots,u_N)$
of $u_1,\dots,u_N$ as
\begin{align}
G_{x}(u_1,\dots,u_N)=&\prod_{j=1}^N \frac{(1-t)cu_j(au_j+b)^M}{eu_j+f}
\prod_{1 \le j < k \le N} \frac{tu_j-u_k}{u_j-u_k} \nonumber \\
&\times \sum_{\sigma \in S_N}
\prod_{\substack{1 \le j<k \le N \\ \sigma(j)>\sigma(k)}}
\frac{u_{\sigma(k)}-tu_{\sigma(j)}}{tu_{\sigma(k)}-u_{\sigma(j)}}
\prod_{j=1}^N \Bigg(\frac{eu_{\sigma(j)}+f}{au_{\sigma(j)}+b} \Bigg)^{x_j},
\label{symmetricpolynomialsone}
\\
\overline{G}_{x}(u_1,\dots,u_N)
=&\prod_{j=1}^N \frac{(1-t)d(eu_j+f)^M}{au_j+b}
\prod_{1 \le j < k \le N} \frac{u_j-tu_k}{u_j-u_k} \nonumber \\
&\times \sum_{\sigma \in S_N}
\prod_{\substack{1 \le j<k \le N \\ \sigma(j)>\sigma(k)}}
\frac{tu_{\sigma(k)}-u_{\sigma(j)}}{u_{\sigma(k)}-tu_{\sigma(j)}}
\prod_{j=1}^N \Bigg(\frac{au_{\sigma(j)}+b}{eu_{\sigma(j)}+f} \Bigg)^{x_j}.
\label{symmetricpolynomialstwo}
\end{align}
For a hole configuration $\overline{x}
=(\overline{x_1},\overline{x_2},\dots,\overline{x_N}) \
(1 \le \overline{x_1} < \overline{x_2} < \cdots < \overline{x_N} \le M)$,
we define symmetric polynomials
$H_{\overline{x}}(u_1,\dots,u_N)$
and $\overline{H}_{\overline{x}}(u_1,\dots,u_N)$ 
of $u_1,\dots,u_N$ as
\begin{align}
H_{\overline{x}}(u_1,\dots,u_N)
=&\prod_{j=1}^N \frac{(1-t)cu_j(atu_j+b)^M}{eu_j+tf}
\prod_{1 \le j < k \le N} \frac{u_j-tu_k}{t(u_j-u_k)} \nonumber \\
&\times \sum_{\sigma \in S_N}
\prod_{\substack{1 \le j<k \le N \\ \sigma(j)>\sigma(k)}}
\frac{tu_{\sigma(k)}-u_{\sigma(j)}}{u_{\sigma(k)}-tu_{\sigma(j)}}
\prod_{j=1}^N \Bigg(\frac{eu_{\sigma(j)}+tf}{atu_{\sigma(j)}+b} 
\Bigg)^{\overline{x_j}}, \label{symmetricpolynomialsthree} \\
\overline{H}_{\overline{x}}(u_1,\dots,u_N)
=&\prod_{j=1}^N \frac{(1-t)d(eu_j+tf)^M}{atu_j+b}
\prod_{1 \le j < k \le N} \frac{tu_j-u_k}{t(u_j-u_k)} \nonumber \\
&\times \sum_{\sigma \in S_N}
\prod_{\substack{1 \le j<k \le N \\ \sigma(j)>\sigma(k)}}
\frac{u_{\sigma(k)}-tu_{\sigma(j)}}{tu_{\sigma(k)}-u_{\sigma(j)}}
\prod_{j=1}^N \Bigg(\frac{atu_{\sigma(j)}+b}{eu_{\sigma(j)}+tf}
\Bigg)^{\overline{x_j}}. \label{symmetricpolynomialsfour}
\end{align}

\end{definition}

We prove the correspondences between the wavefunctions
\eqref{wavefunction}, \eqref{wavefunctiontwo},
\eqref{wavefunctionthree}, \eqref{wavefunctionfour}
constructed from the $L$-operator
\eqref{generalizedloperator}, \eqref{constraints2}
and the symmetric polynomials
\eqref{symmetricpolynomialsone}, \eqref{symmetricpolynomialstwo},
\eqref{symmetricpolynomialsthree}, \eqref{symmetricpolynomialsfour}.
\begin{theorem} \label{theoremwavefunctions}
The wavefunctions
\eqref{wavefunction}, \eqref{wavefunctiontwo},
\eqref{wavefunctionthree}, \eqref{wavefunctionfour}
constructed from the $L$-operator
\eqref{generalizedloperator}, \eqref{constraints2}
are expressed by the symmetric polynomials
\eqref{symmetricpolynomialsone}, \eqref{symmetricpolynomialstwo},
\eqref{symmetricpolynomialsthree}, \eqref{symmetricpolynomialsfour}
as follows:
\begin{align}
\bra x_1 \cdots x_N | \psi(\{u\}_N) \ket
&=G_x(u_1,\dots,u_N),
\label{generalizedwavefunction} \\
\langle \psi(\{u\}_N)|x_1 \cdots x_N \rangle&
=\overline{G}_x(u_1,\dots,u_N),
\label{generalizedwavefunctiontwo} \\
\langle \phi(\{ u \}_N)| \overline{x_1} \cdots \overline{x_N}
\rangle&=H_{\overline{x}}(u_1,\dots,u_N),
\label{generalizedwavefunctionthree} \\
\langle \overline{x_1} \cdots \overline{x_N}|\phi(\{u\}_N) \rangle&
=\overline{H}_{\overline{x}}(u_1,\dots,u_N).
\label{generalizedwavefunctionfour}
\end{align}
\end{theorem}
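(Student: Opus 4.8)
The plan is to prove the four identities \eqref{generalizedwavefunction}--\eqref{generalizedwavefunctionfour} by the combined matrix product / domain wall strategy. Since the four statements are related by the analogous computation with $C$-operators and by particle--hole duality, I would prove \eqref{generalizedwavefunction} in full and then indicate how the others follow. First I note that both sides are symmetric in $u_1,\dots,u_N$: the right-hand side $G_x$ is symmetric (a direct check balancing the Bethe-type prefactor $\prod_{j<k}\frac{tu_j-u_k}{u_j-u_k}$ against the $S_N$ sum), while the left-hand side is symmetric because the $B$-operators commute, which is one of the commutation relations of the Yang--Baxter algebra generated by the entries of $T_a(u)$ through the $RTT$ relation following from \eqref{RLL}. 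Hence it suffices to match the two symmetric functions.

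Next I would set up the matrix product method by reading the $N\times M$ lattice column by column. The base case $N=1$ is a direct computation: by the ice rule a single $B(u)$ creates exactly one particle, the auxiliary line carrying state $1$ on one side of the creation column and $0$ on the other, so that $\langle x|B(u)|\Omega\rangle=(eu+f)^{x-1}(1-t)cu\,(au+b)^{M-x}$, which equals $\frac{(1-t)cu(au+b)^M}{eu+f}\,z(u)^{x}$ with $z(u)=\frac{eu+f}{au+b}$ --- exactly $G_{(x)}$. For general $N$ the ice rule forces each of the $N$ auxiliary lines to create a single particle while the quantum lines transport particles upward; contracting the columns then expresses the overlap as a sum over the routings of these particles to the final positions $x_1<\cdots<x_N$, which organizes into the coordinate-Bethe-ansatz form $\sum_{\sigma\in S_N}A_\sigma(u)\prod_{j=1}^N z(u_{\sigma(j)})^{x_j}$.

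It then remains to identify the amplitudes $A_\sigma$. Restricting the $RLL$ relation \eqref{RLL} to the two-particle sector yields the two-body exchange factor $\frac{u_{\sigma(k)}-tu_{\sigma(j)}}{tu_{\sigma(k)}-u_{\sigma(j)}}$ attached to each inversion of $\sigma$, and factorized scattering (a consequence of the Yang--Baxter relation \eqref{YBE}) forces $A_\sigma$ to be the product of these factors over the inversions, up to a single $\sigma$-independent normalization. This last factor --- in particular the term $\prod_{1\le j<k\le N}\frac{tu_j-u_k}{u_j-u_k}$ --- is fixed by evaluating the fully packed configuration $M=N$, $x=(1,\dots,N)$, which is precisely the domain wall boundary partition function. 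Here I would invoke the Izergin--Korepin determinant for the inhomogeneous partition function, take its homogeneous limit, and match the result to $G_{(1,\dots,N)}$, thereby determining the normalization and completing \eqref{generalizedwavefunction}.

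Finally, \eqref{generalizedwavefunctiontwo} follows from the analogous computation with the $C$-operators, in which the creation-vertex weight $(1-t)cu$ is replaced by $(1-t)d$, producing $\overline{G}_x$; and \eqref{generalizedwavefunctionthree}, \eqref{generalizedwavefunctionfour} follow from the same analysis built on the fully occupied reference state $|1^M\rangle$, for which the background diagonal weights become $atu+b$ and $eu+tf$ --- exactly the substitutions appearing in $H_{\overline{x}}$ and $\overline{H}_{\overline{x}}$. The hard part will be the amplitude-and-normalization step: proving that for all $N$ the amplitudes genuinely factorize into two-body $S$-matrices with no surviving multi-particle ``diffractive'' contributions (so that the matrix product is globally consistent), and pinning down the overall constant through the homogeneous limit of the Izergin--Korepin determinant, where the coincidence limits of the spectral parameters must be taken with care.
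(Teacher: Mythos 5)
Your overall strategy (column-by-column matrix product rewriting, reduction to a coordinate-Bethe-ansatz sum, normalization via the domain wall boundary partition function) is the same as the paper's, but the proposal leaves the central step unproved. You write that ``factorized scattering (a consequence of the Yang--Baxter relation) forces $A_\sigma$ to be the product of these factors over the inversions, up to a single $\sigma$-independent normalization,'' and you yourself flag this as ``the hard part.'' For a finite-lattice overlap with the fixed boundary projector $Q=|1^N\rangle\langle 0^N|$ this is precisely what must be established, and it does not follow from \eqref{YBE} or \eqref{RLL} by a soft appeal to factorized scattering: one must show that the column operators obey a Zamolodchikov--Faddeev-type algebra and that, after commuting all creation pieces past the powers of $\mathcal{A}_N$, the surviving trace is independent of the configuration $x$. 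The paper does exactly this in Lemma \ref{algebra} and Proposition \ref{propostionforwavefunctionone}: it decomposes $\mathcal{C}_N=\sum_j\mathcal{C}_N^{(j)}$, proves the relations \eqref{rel2}--\eqref{rel4} by induction through an explicit simultaneous diagonalization (the matrices $G_{n+1}$ and $H_n$ of \eqref{G-matrix}, \eqref{H-matrix}), and only then reads off both the inversion factors and the $x$-independence of the prefactor $K$. Without a substitute for this lemma your argument is incomplete at its core.

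A second, smaller divergence is the normalization step. You propose to take the homogeneous limit of the Izergin--Korepin determinant and ``match the result to $G_{(1,\dots,N)}$.'' But $G_{(1,\dots,N)}$ is a symmetrized sum, not a determinant, so this matching is itself a nontrivial identity (essentially the statement that \eqref{determinanthomogeneous} equals \eqref{usethisfordetermination2}, which the paper never needs for the theorem). The paper instead proves the symmetrized-sum representation \eqref{inhomogeneoussumrepresentation} of $Z_N(\{u\}_N|\{w\}_N)$ directly, by verifying the Korepin uniqueness properties (degree in $w_N$, symmetry, initial condition, recursion \eqref{recursiondomain}) and then setting $w_j=1$; the determinant form is kept for the pairing formulas of Section 6. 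Your route could be made to work but would owe an extra determinant-equals-sum proof. Note also that it is the frozen-region factorization \eqref{usethisfordetermination} that lets one use $\langle 1\cdots N|\psi(\{u\}_N)\rangle$ for general $M$ rather than only $M=N$, which matters because the prefactor $K$ depends on $M$.
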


Let us give here some comments.
From the right hand side of the expression
\eqref{generalizedwavefunction},
it is hard to see that it is a symmetric polynomial in $u_j$.
However, once the correspondence is proven,
the symmetry can be shown from the fact that the left hand side
$\langle x_1 \cdots x_N|\psi(\{ u \}_N) \rangle
=\bra x_1 \cdots x_N |B(u_N) \cdots B(u_1)| \Omega \rangle$
is symmetric in $u_j$ since the 
$B$-operators form a commutative family
${[} B(u_j), B(u_k) {]}=0$.
The commutativity of the $B$-operators is an immediate consequence
of the $RLL$ relation \eqref{RLL}.

We remark that similar results for
\eqref{generalizedwavefunction} in Theorem \ref{theoremwavefunctions}
have been obtained for the case of $q$-boson models
\cite{MS2,Bogo,Ts,Bor,BP1,BP2,WZ} by different methods in this paper.
We give a proof of Theorem \ref{theoremwavefunctions}
by using the matrix product method
and the domain wall boundary partition function in the next two sections.
We also mention that the $q$-boson models treated in those papers
have fewer free parameters
(special cases of the parameters $t,a,b,c,d,e,f$ under
the constraints \eqref{constraints2})
than the vertex model treated in this paper.
It is interesting to find the corresponding $q$-boson model
which is the counterpart of the spin-1/2 vertex model in this paper.
A special case of the correspondence between
the wavefunctions of the boson model and the spin-1/2 vertex model
is given in \cite{MS2}.

The parameters $a$, $b$, $c$, $d$, $e$ and $f$ of the
$L$-operator \eqref{generalizedloperator}
satisfy the constraints \eqref{constraints2}.
In particular, it seems that the following specialization
$a=1$, $b=t \beta$, $c=1$, $d=1$, $e=-\beta^{-1}$, $f=-1$
is important.
Under this specialization, the $L$-operator is written as
\begin{eqnarray}
L_{aj}(u)=\left( 
\begin{array}{cccc}
u+t \beta & 0 & 0 & 0 \\
0 & t(u+\beta) & (1-t)u & 0 \\
0 & 1-t & \beta^{-1}u-1 & 0 \\
0 & 0 & 0 & -\beta^{-1}u-t
\end{array}
\right). \label{generalizedloperatorspecialization}
\end{eqnarray}
The wavefunction \eqref{generalizedwavefunction}
is now given by the symmetric polynomials as
\begin{align}
\langle x_1 \cdots x_N|\psi(\{ u \}_N) \rangle
=&\prod_{j=1}^N \frac{(1-t)u_j(u_j+t \beta)^M}{-\beta^{-1} u_j-1}
\prod_{1 \le j < k \le N} \frac{tu_j-u_k}{u_j-u_k} \nonumber \\
&\times \sum_{\sigma \in S_N}
\prod_{\substack{1 \le j<k \le N \\ \sigma(j)>\sigma(k)}}
\frac{u_{\sigma(k)}-tu_{\sigma(j)}}{tu_{\sigma(k)}-u_{\sigma(j)}}
\prod_{j=1}^N \Bigg(\frac{-\beta^{-1} u_{\sigma(j)}-1}{u_{\sigma(j)}+t \beta} \Bigg)^{x_j}.
\end{align}
If one furthermore set the parameter of the quantum group
$t$ to $t=0$, the six-vertex model reduces to the five-vertex model
investigated in \cite{MS} (up to gauge transformation, see also \cite{GK2}
for a model with inhomogeneties),
whose wavefunction
becomes the Grothendieck polynomials
\begin{align}
\langle x_1 \cdots x_N|\psi(\{ u \}_N) \rangle
=&
\prod_{j=1}^N \frac{u_j^{M+1}}{-\beta^{-1} u_j-1}
\prod_{1 \le j<k \le N} \frac{-u_k}{u_j-u_k} \nonumber \\
&\times \sum_{\sigma \in S_N}\prod_{\substack{1 \le j<k \le N \\ \sigma(j)>\sigma(k)}}
\frac{-u_{\sigma(k)}}{u_{\sigma(j)}}
\prod_{j=1}^N
(-\beta^{-1}-u_{\sigma(j)}^{-1})^{x_j} \nonumber \\
=&
\prod_{j=1}^N \frac{u_j^M}{-\beta^{-1} u_j-1}
\prod_{1 \le j<k \le N} \frac{1}{u_k-u_j} \nonumber \\
&\times \sum_{\sigma \in S_N}
\mathrm{sgn}(\sigma)
\prod_{j=1}^N u_j^j
\prod_{\substack{1 \le j<k \le N \\ \sigma(j)>\sigma(k)}}
\frac{u_{\sigma(k)}}{u_\sigma(j)}
\prod_{j=1}^N
(-\beta^{-1}-u_{\sigma(j)}^{-1})^{x_j} \nonumber \\
=&\frac{\prod_{j=1}^N u_j^M (-\beta^{-1} u_j-1)^{-1}}
{\prod_{1 \le j < k \le N}(u_k-u_j)}
\mathrm{det}_N(u_j^k(-\beta^{-1}-u_j^{-1})^{x_k})
\nonumber \\
=&(-\beta)^{-N(N-1)/2}\prod_{j=1}^N u_j^M 
G_\lambda(\bs{z};\beta). \label{correspondence}
\end{align}
Here, $G_{\lambda}(\bs{z};\beta)$ is the
$\beta$-Grothendieck polynomials of the Grassmannian vraiety $\mathrm{Gr}(M,N)$
\cite{LS,FK,Buch,IN,IS,Mc}, which is known to have the following
determinant form
\begin{align}
G_\lambda(\bs{z};\beta)=
   \frac{\mathrm{det}_N(z_j^{\lambda_k+N-k}(1+\beta z_j)^{k-1})}
        {\prod_{1 \le j < k \le N}(z_j-z_k)}.
 \label{GR}
\end{align}
In this correspondence between the wavefunctions
and the Grothendieck polynomials \eqref{correspondence},
the symmetric variables $\bs{z}=\{z_1,\dots,z_N\}$
for the Grothendieck polynomials and the
spectral parameters $u_1,\dots,u_N$ of the wavefunction
are related by the correspondence
$z_j=-\beta^{-1}-u_j^{-1}$, $j=1,\dots,N$.
For each Young diagram
$\lambda=(\lambda_1,\lambda_2,\dots,\lambda_N) \in \mathbb{Z}^N$ 
($M-N \ge \lambda_1 \ge \lambda_2 \ge \dots \ge \lambda_N \ge 0$)
there is a corresponding configuration of particles
$| x_1 \cdots x_N \rangle$
($1 \le x_1 < x_2 < \cdots < x_N \le M$) by the translation rule
$\lambda_j=x_{N-j+1}-N+j-1$, $j=1,\dots,N$.

From this observation, one can see that the symmetric polynomials
\eqref{symmetricpolynomialsone} giving the correspondence
\eqref{generalizedwavefunction} can be regarded as
a quantum group deformation of the Grothendieck polynomials.

We prove \eqref{generalizedwavefunction} in the next two sections.
Before ending this section,
we check \eqref{generalizedwavefunction} by an example. \\

{\bf Example}
Let us check \eqref{generalizedwavefunction}
for the case $M=4$, $N=2$, $x_1=2$, $x_2=4$.
One finds from the graphical description of the $L$-operator
(see Figures \ref{picturecontributionone},
\ref{picturecontributiontwo} and \ref{picturecontributionthree}
for the graphical description needed to calculate the left hand side)
that the left hand side of \eqref{generalizedwavefunction} is given by
\begin{align}
(L.H.S)&=(eu_1+f)(eu_2+f)(1-t)^2 c^2 u_1 u_2 X,
\nonumber \\
X&=(eu_1+f)^2 (au_2+b)(atu_2+b)
+(1-t)^2cdu_2(au_1+b)(eu_1+f) \nonumber \\
&+(au_1+b)^2 (eu_2+f)(eu_2+tf).
\end{align}
On the other hand, the right hand side is given by
\begin{align}
(R.H.S)&=(eu_1+f)(eu_2+f)(1-t)^2 c^2 u_1 u_2 Y,
\nonumber \\
Y&=\frac{1}{u_1-u_2}
\{
(au_1+b)^2 (eu_2+f)^2 (tu_1-u_2)+(eu_1+f)^2 (au_2+b)^2 (u_1-tu_2)
\}.
\end{align}
Calculating the difference of both hand sides, one gets
\begin{align}
(L.H.S)-(R.H.S)&=(eu_1+f)(eu_2+f)(1-t)^2 c^2 u_1 u_2 (X-Y), \\
X-Y&=bf(t-1)(be-af+(t-1)cd)u_2 \nonumber \\
&+(be+af)(t-1)(be-af+(t-1)cd)u_1u_2 \nonumber \\
&+ae(t-1)(be-af+(t-1)cd)u_1^2u_2.
\end{align}
Using the relations $cd+af=0$ and $tcd+be=0$,
one finds $X-Y=0$, and thus both hand sides of
\eqref{generalizedwavefunction} are checked to be equal.

\begin{figure}[ht]
\includegraphics[width=10cm]{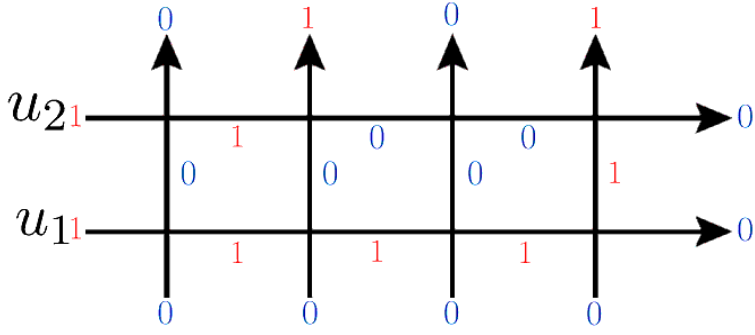}
\caption{One of the states making a contribution of a factor
$(eu_1+f)(eu_2+f)(1-t)cu_2(au_2+b)(atu_2+b)(eu_1+f)(eu_1+f)(1-t)cu_1$
to the wavefunction $\langle 2,4|B(u_2)B(u_1)|\Omega \rangle$.}
\label{picturecontributionone}
\end{figure}

\begin{figure}[ht]
\includegraphics[width=10cm]{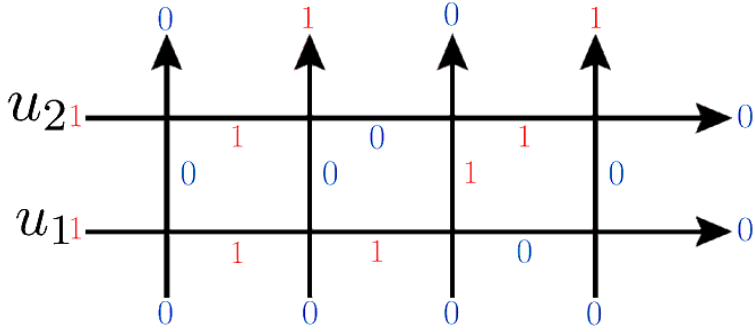}
\caption{One of the states making a contribution of a factor
$(eu_1+f)(eu_2+f)(1-t)cu_2(1-t)d(1-t)cu_2(eu_1+f)(1-t)cu_1(au_1+b)$
to the wavefunction $\langle 2,4|B(u_2)B(u_1)|\Omega \rangle$.}
\label{picturecontributiontwo}
\end{figure}

\begin{figure}[ht]
\includegraphics[width=10cm]{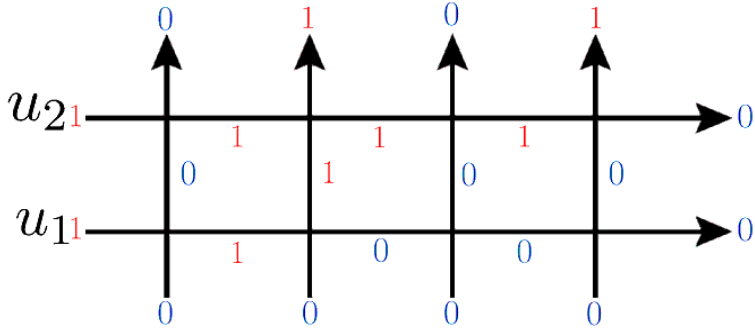}
\caption{One of the states making a contribution of a factor
$(eu_1+f)(eu_2+f)(eu_2+tf)(eu_2+f)(1-t)cu_2(1-t)cu_1(au_1+b)(au_1+b)$
to the wavefunction $\langle 2,4|B(u_2)B(u_1)|\Omega \rangle$.}
\label{picturecontributionthree}
\end{figure}

\section{Matrix product representation}
In this section, we prove \eqref{generalizedwavefunction}
in Theorem \ref{theoremwavefunctions}
by using the matrix product method and
the domain wall boundary partition function.
The same strategy was used in \cite{MS}
to investigate the relation between the wavefunction of
an integrable five-vertex model and the Grothendieck polynomials,
and in \cite{MSW1,MoFelderhof}
to analyze the relation between the wavefunctions
of the Felderhof model and the Schur polynomials.
The results for the domain wall boundary partition function
used in this section is proved in the next section.
The other corresopndences
\eqref{generalizedwavefunctiontwo}, \eqref{generalizedwavefunctionthree}
and \eqref{generalizedwavefunctionfour}
in Theorem \ref{theoremwavefunctions} can be proved in the same way.
We assume the parameters
in the $L$-operator $a,b,c,d,e,f$ to be nonzero and $t \neq 1$ since
one sometimes needs this assumption in the proof.

The strategy of the proof is as follows.
We first rewrite the wavefunction \\
$\bra x_1 \cdots x_N | \psi(\{u\}_N) \ket$
into a matrix product form, following \cite{GMmat,KM},
and show that the wavefunction can be expressed as
\begin{align}
\bra 
x_1 \cdots x_N |\psi(\{u\}_N)
\ket
=&K
\sum_{\sigma \in S_N}
\prod_{\substack{1 \le j<k \le N \\ \sigma(j)>\sigma(k)}}
\frac{u_{\sigma(k)}-tu_{\sigma(j)}}{tu_{\sigma(k)}-u_{\sigma(j)}}
\prod_{j=1}^N \Bigg(\frac{eu_{\sigma(j)}+f}{au_{\sigma(j)}+b} \Bigg)^{x_j},
\label{uptoprefactor}
\end{align}
where $K$ is a prefactor which does not depend on
the particle configurations $x=(x_1,\dots,x_N)$ of the wavefunction.
Next, by evaluating the exact form of a particular wavefunction \\
$\langle 1 \cdots N|\psi(\{u\}_N) \rangle$ with the help
of the analysis on the domain wall boundary partition function,
we show that the prefactor $K$ in \eqref{uptoprefactor} 
is given by the following form
\begin{align}
K=\prod_{j=1}^N \frac{(1-t)cu_j(au_j+b)^M}{eu_j+f}
\prod_{1 \le j < k \le N} \frac{tu_j-u_k}{u_j-u_k},
\end{align}
which concludes the proof of \eqref{generalizedwavefunction}.

\begin{proof}

Let us begin to compute the wavefunction \\
$\bra x_1 \cdots x_N | \psi(\{u\}_N) \ket
=\bra x_1 \cdots x_N | \prod_{j=1}^N B(u_j)| \Omega \ket$.
We first rewrite it into the matrix product representation.
With the help of its graphical description,
one finds that the wavefunction can be written as
\begin{align}
\bra x_1 \cdots x_N | \prod_{j=1}^N B(u_j)| \Omega \ket
=\Tr_{W^{\otimes N}}
\left[ Q
\bra x_1 \cdots x_N | \prod_{a=1}^N T_{a}(u_a) |
\Omega \ket
\right],
\label{overlap}
\end{align}
where $Q=| 1^N \rangle \langle 0^N |$
is an operator acting on the tensor product of auxiliary spaces
$W_1\otimes  \dots \otimes W_N$.
The trace here is also over the auxiliary spaces.

\begin{figure}[ht]
\includegraphics[width=15cm]{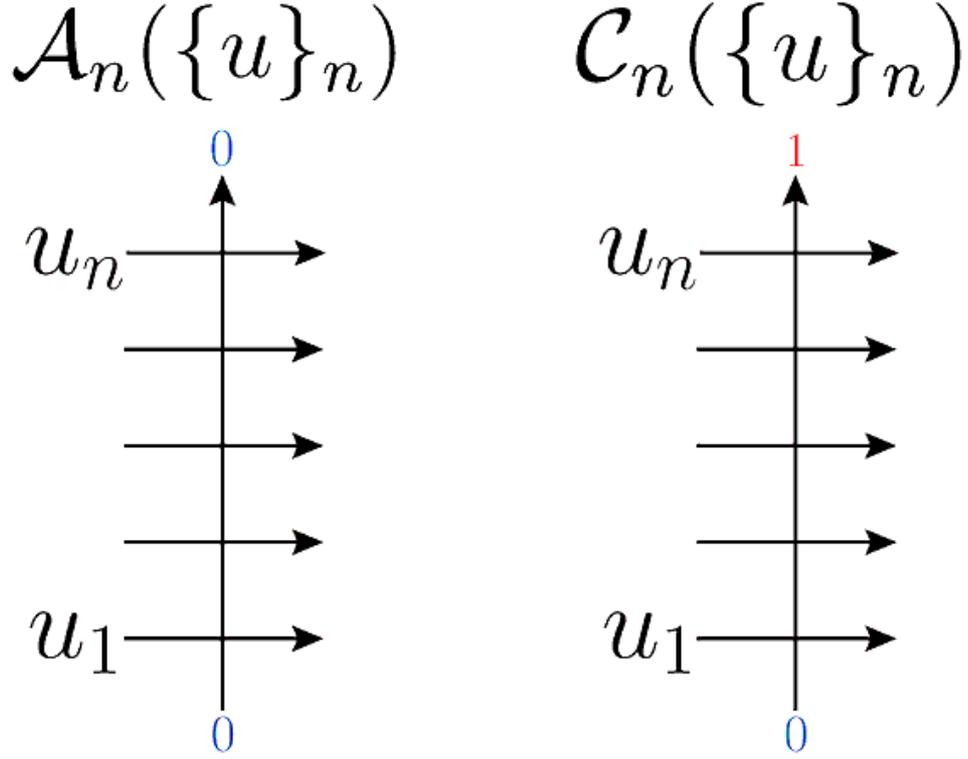}
\caption{A graphical representation
of the matrix elements
$\mathcal{A}_n (\{u\}_n)$ and $\mathcal{C}_n (\{u\}_n)$
of the monodromy matrix $\mathcal{T}_j(\{u\}_n)$.}
\label{pictureanothermonodromy}
\end{figure}

Next we change the viewpoint of the monodromy matrices
from the original one
$T_a(u_a) \in \mathrm{End}
(W_a \otimes V_1 \otimes \cdots \otimes V_M)$
to the following one
\begin{align}
\mathcal{T}_j(\{u\}_N):= 
\prod_{a=1}^N L_{a j}(u_{a}) \in \End( W^{\otimes N} \otimes V_j),
\end{align}
which can be regarded as a monodromy matrix consisting of
$L$-operators acting on the same quantum space $V_j$
(but acting on different auxiliary spaces). 
The monodromy matrix $\mathcal{T}_j(\{u\}_N)$
is decomposed as
\begin{align}
\mathcal{T}_j(\{u\}_N)
&:=\begin{pmatrix}
\mathcal{A}_N (\{u\}_N) & \mathcal{B}_N(\{u\}_N) \\
\mathcal{C}_N(\{u\}_N) &  \mathcal{D}_N(\{u\}_N)
\end{pmatrix}_j,
\label{decomp}
\end{align}
where the elements
($\mathcal{A}_N$, etc.) act on 
$W_1\otimes \dots \otimes W_N$ (Figure \ref{pictureanothermonodromy}).

Using the matrix elements
$\mathcal{A}_N (\{u\}_N)$ and $\mathcal{C}_N (\{u\}_N)$
of the monodromy matrix $\mathcal{T}_j(\{u\}_N)$,
one finds the wavefunction \eqref{overlap}
can be written as
\begin{align}
\bra x_1 \cdots x_N | \psi(\{u\}_N) \ket
&=\Tr_{W^{\otimes N}}
\left[ Q
\bra x_1 \cdots x_N |
\prod_{j=1}^M \mathcal{T}_j(\{u\}_N)
|
\Omega \ket
\right] \nn \\
&=\Tr_{W^{\otimes N}}\left[ Q
\mathcal{A}_N^{M-x_N}
\mathcal{C}_N
\mathcal{A}_N^{x_N-x_{N-1}-1}
\dots\mathcal{C}_N\mathcal{A}_N^{x_2-x_1-1}\mathcal{C}_N\mathcal{A}_N^{x_1-1}
\right].
\label{reov}
\end{align}

In order to convert the expression \eqref{reov}
to the one \eqref{uptoprefactor},
we derive commutation relations between 
the operators $\mathcal{A}_N$ and $\mathcal{C}_N$
(Figure \ref{picturerecursivemonodromy}).

\begin{figure}[ht]
\includegraphics[width=15cm]{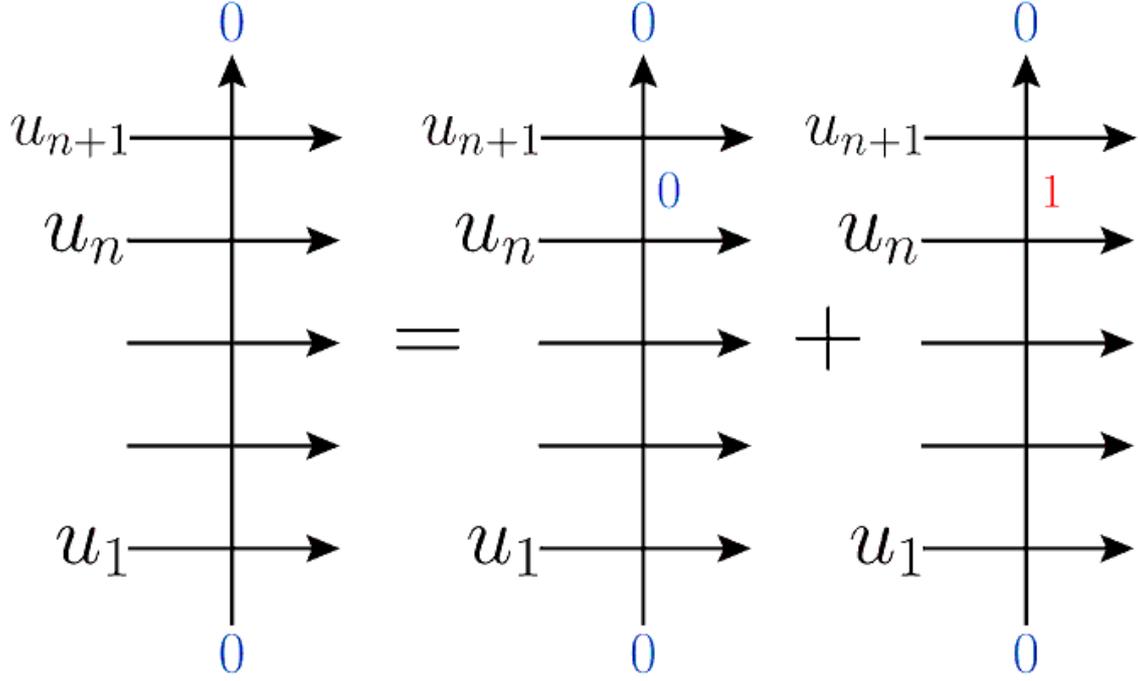}
\caption{A graphical representation
of the recursive relation \eqref{reop1}
between the monodromy matrices.}
\label{picturerecursivemonodromy}
\end{figure}

First, one finds the following recursive 
relations for these operators: 
\begin{align}
&\mathcal{A}_{n+1}(\{u\}_{n+1})
=
\begin{pmatrix}
au_{n+1}+b & 0  \\
0 & eu_{n+1}+f
\end{pmatrix} 
\otimes
\mathcal{A}_n(\{u\}_n)
+
\begin{pmatrix} 
0 & 0  \\
(1-t)d & 0
\end{pmatrix}
\otimes
\mathcal{C}_n(\{u\}_n),
\label{reop1} \\
&\mathcal{C}_{n+1}(\{u\}_{n+1})
=
\begin{pmatrix}
0 & (1-t)cu_{n+1}  \\
0 & 0
\end{pmatrix}
\otimes
\mathcal{A}_n(\{u\}_n) 
+
\begin{pmatrix}
atu_{n+1}+b & 0  \\
0 & eu_{n+1}+ft
\end{pmatrix}
\otimes
\mathcal{C}_n(\{u\}_n),
\label{reop2}  
\end{align}
with the initial condition
\begin{align}
\mathcal{A}_1=
\begin{pmatrix}
au_1+b & 0 \\
0    &  eu_1+f
\end{pmatrix}, \quad
\mathcal{C}_1=
\begin{pmatrix}
0 & (1-t)cu_1 \\
0 & 0 
\end{pmatrix}.
\label{initialCD}
\end{align}
By using the recursive relations \eqref{reop1}, \eqref{reop2}
and the initial condition \eqref{initialCD},
one sees that these operators satisfy the following simple algebra.
\begin{lemma}\label{algebra}
There exists a decomposition of $\mathcal{C}_n$ :
$\mathcal{C}_n=\sum_{j=1}^n \mathcal{C}_n^{(j)}$ such that
the following algebraic relations hold for $\mathcal{A}_n$ and $\mathcal{C}_n^{(j)}$:
\begin{align}
&\mathcal{C}_n^{(j)}\mathcal{A}_n
=\frac{eu_j+f}{au_j+b}\mathcal{A}_n \mathcal{C}_n^{(j)}, \label{rel2} \\
&(\mathcal{C}_n^{(j)})^2=0, \label{rel3} \\
&\mathcal{C}_n^{(j)}\mathcal{C}_n^{(k)}
=
\frac{(eu_j+f)(au_k+b)(u_j-tu_k)}{(au_j+b)(eu_k+f)(tu_j-u_k)}
\mathcal{C}_n^{(k)}\mathcal{C}_n^{(j)}, \ \ \ (j \neq k)
\label{rel4}.
\end{align}
\end{lemma}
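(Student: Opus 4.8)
The plan is to prove Lemma \ref{algebra} by induction on $n$, tracking the decomposition $\mathcal{C}_n = \sum_{j=1}^n \mathcal{C}_n^{(j)}$ simultaneously with the three algebraic relations. The natural definition of the summands comes from iterating the recursion \eqref{reop2}: each application introduces one new ``channel'' in which the spectral parameter $u_{n+1}$ enters through the off-diagonal block $\bigl(\begin{smallmatrix} 0 & (1-t)cu_{n+1} \\ 0 & 0 \end{smallmatrix}\bigr) \otimes \mathcal{A}_n$, while the diagonal block $\bigl(\begin{smallmatrix} atu_{n+1}+b & 0 \\ 0 & eu_{n+1}+ft \end{smallmatrix}\bigr) \otimes \mathcal{C}_n$ propagates the existing channels. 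So I would set $\mathcal{C}_{n+1}^{(n+1)} := \bigl(\begin{smallmatrix} 0 & (1-t)cu_{n+1} \\ 0 & 0 \end{smallmatrix}\bigr) \otimes \mathcal{A}_n$ and $\mathcal{C}_{n+1}^{(j)} := \bigl(\begin{smallmatrix} atu_{n+1}+b & 0 \\ 0 & eu_{n+1}+ft \end{smallmatrix}\bigr) \otimes \mathcal{C}_n^{(j)}$ for $j \le n$, with the base case $\mathcal{C}_1^{(1)} = \mathcal{C}_1$ read off from \eqref{initialCD}.

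First I would verify the base case $n=1$: relation \eqref{rel2} is the scalar identity $\mathcal{C}_1 \mathcal{A}_1 = \frac{eu_1+f}{au_1+b}\mathcal{A}_1 \mathcal{C}_1$, which holds because $\mathcal{A}_1$ is diagonal and $\mathcal{C}_1$ is strictly upper-triangular, so both sides reduce to comparing the single nonzero matrix entry; relation \eqref{rel3} is immediate since $\mathcal{C}_1$ is nilpotent of the stated form; and \eqref{rel4} is vacuous. For the inductive step I would substitute the recursions \eqref{reop1}, \eqref{reop2} and the just-defined decomposition into each of the three relations, expand the tensor-product matrix multiplications block by block, and reduce the resulting factor-space identities to the inductive hypotheses for $\mathcal{A}_n$, $\mathcal{C}_n^{(j)}$ together with the $2\times 2$ matrix algebra in the auxiliary space $W_{n+1}$. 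The key observation that makes this work is that the $2\times 2$ blocks in \eqref{reop1}, \eqref{reop2} are themselves diagonal or strictly triangular, so their products collapse cleanly and the spectral-parameter-dependent scalars $\frac{eu_{n+1}+f}{au_{n+1}+b}$, etc., factor out exactly as the right-hand sides of \eqref{rel2}--\eqref{rel4} demand.

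The relation requiring the most care is the commutation relation \eqref{rel4} for distinct channels $j \neq k$, particularly the subcase where one of the indices equals the newly introduced $n+1$. When $j,k \le n$ both channels are propagated by the same diagonal block, so \eqref{rel4} follows from the inductive hypothesis once the diagonal blocks are commuted past each other; but when, say, $k = n+1$, one must commute the new off-diagonal block $\bigl(\begin{smallmatrix} 0 & (1-t)cu_{n+1} \\ 0 & 0 \end{smallmatrix}\bigr)\otimes \mathcal{A}_n$ past the propagated $\bigl(\begin{smallmatrix} atu_{n+1}+b & 0 \\ 0 & eu_{n+1}+ft \end{smallmatrix}\bigr)\otimes \mathcal{C}_n^{(j)}$, and here the inductive relation \eqref{rel2} between $\mathcal{A}_n$ and $\mathcal{C}_n^{(j)}$ is precisely what is needed to supply the factor $\frac{eu_j+f}{au_j+b}$, while the $2\times 2$ block product supplies the $u_{n+1}$-dependent ratio $\frac{u_j-tu_{n+1}}{tu_j-u_{n+1}}$. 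I expect this coupling — where the off-diagonal $W_{n+1}$-block product and the inductive $\mathcal{A}_n$-$\mathcal{C}_n^{(j)}$ relation must conspire to reproduce the exact rational prefactor in \eqref{rel4} — to be the main obstacle, since it is the only place where all three relations feed into one another and where the precise form of the $U_q(sl_2)$ weights in \eqref{generalizedloperator} (and hence the constraints \eqref{constraints2}) is used.
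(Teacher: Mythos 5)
Your proposed decomposition does not work, and the failure is exactly at the point you gloss over: the recursion \eqref{reop1} makes $\mathcal{A}_{n+1}$ a block \emph{lower-triangular} matrix with the nonzero off-diagonal block $\bigl(\begin{smallmatrix} 0&0\\(1-t)d&0\end{smallmatrix}\bigr)\otimes\mathcal{C}_n$, not a block-diagonal one. Writing $\mathcal{A}_{n+1}=\bigl(\begin{smallmatrix} P&0\\ R&Q\end{smallmatrix}\bigr)$ with $P=(au_{n+1}+b)\mathcal{A}_n$, $Q=(eu_{n+1}+f)\mathcal{A}_n$, $R=(1-t)d\,\mathcal{C}_n$, and taking your $\mathcal{C}_{n+1}^{(n+1)}=\bigl(\begin{smallmatrix} 0&X\\0&0\end{smallmatrix}\bigr)$ with $X=(1-t)cu_{n+1}\mathcal{A}_n$, one finds $\mathcal{C}_{n+1}^{(n+1)}\mathcal{A}_{n+1}=\bigl(\begin{smallmatrix} XR&XQ\\0&0\end{smallmatrix}\bigr)$ while $\mathcal{A}_{n+1}\mathcal{C}_{n+1}^{(n+1)}=\bigl(\begin{smallmatrix} 0&PX\\0&RX\end{smallmatrix}\bigr)$: the left-hand side of \eqref{rel2} has a nonzero $(1,1)$ block $XR\propto\mathcal{A}_n\mathcal{C}_n$ whereas the right-hand side has $(1,1)$ block zero, so \eqref{rel2} already fails for $j=n+1$. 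A similar mismatch (in the $(2,1)$ block, where $\mathcal{C}_n^{(j)}\mathcal{C}_n$ cannot be commuted past with a single scalar because the coefficients in \eqref{rel4} depend on $k$) kills \eqref{rel2} for $j\le n$ as well.

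The missing idea in the paper's proof is a recursive change of basis: one assumes inductively that $\mathcal{A}_n$ is diagonalizable, $\mathscr{A}_n=G_n^{-1}\mathcal{A}_nG_n$, and constructs $G_{n+1}=\bigl(\begin{smallmatrix} G_n&0\\ G_nH_n&G_n\end{smallmatrix}\bigr)$ with $H_n$ chosen precisely to cancel the offending block $R$, which is possible thanks to the inductive relation \eqref{rel2} and yields the explicit formula $H_n=\mathscr{A}_n^{-1}\sum_j\frac{au_j+b}{c(u_j-u_{n+1})}\mathscr{C}_n^{(j)}$. The decomposition is then defined in the \emph{conjugated} basis, and the resulting summands for $j\le n$ carry the rational prefactors $\frac{(u_j-tu_{n+1})(au_{n+1}+b)}{u_j-u_{n+1}}$ and $\frac{(tu_j-u_{n+1})(eu_{n+1}+f)}{u_j-u_{n+1}}$ on the two diagonal blocks --- not the naive weights $atu_{n+1}+b$ and $eu_{n+1}+ft$ you propose. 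Since the relations \eqref{rel2}--\eqref{rel4} are basis-independent, proving them for the conjugated operators suffices. Without this conjugation step your induction cannot close, so the proposal as written has a genuine gap rather than being merely a stylistic variant of the paper's argument.
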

\begin{proof}
We show by induction on $n$.  For $n=1$,  from \eqref{initialCD}
$\mathcal{A}_1$ is diagonal and one can directly
see that the relations are satisfied.
For $n$, we assume that $\mathcal{A}_n$ is diagonalizable and write the 
corresponding diagonal matrix as $\mathscr{A}_n=G_n^{-1}\mathcal{A}_n G_n$. 
Also writing  $\mathscr{C}_n=G_n^{-1} \mathcal{C}_n G_n$ and 
$\mathscr{C}_n=\sum_{j=1}^{n} \mathscr{C}_n^{(j)}$, and  noting that
the algebraic relations above do not depend on the choice of basis, we suppose by the
induction hypothesis that the same relations are satisfied by $\mathscr{A}_n$
and $\mathscr{C}_n^{(j)}$. 

We show that the relations hold for $n+1$. To this end, we first
construct $G_{n+1}$. Noting from \eqref{reop1} that $\mathcal{A}_{n+1}$ is an 
upper triangular block matrix whose block diagonal elements are written in 
terms of $\mathcal{A}_n$, 
we assume that $G_{n+1}$ is written as
\begin{equation}
G_{n+1}=
\begin{pmatrix}
G_n &  0 \\
G_n H_n  & G_n
\end{pmatrix},
\label{G-matrix}
\end{equation}
where $2n\times 2n$ matrix $H_n$ remains to be determined. 
Using the induction hypothesis for $n$, one obtains
\begin{align}
&G_{n+1}^{-1}\mathcal{A}_{n+1} G_{n+1} \nonumber \\
=&
\begin{pmatrix}
(au_{n+1}+b) \mathscr{A}_n & 0 \\
(eu_{n+1}+f)\mathscr{A}_n H_n
+(1-t)d \mathscr{C}_n
-(au_{n+1}+b)H_n \mathscr{A}_n
& (eu_{n+1}+f)\mathscr{A}_n
\end{pmatrix}.
\end{align}
The above matrix is guaranteed to be diagonal when
\begin{equation}
(eu_{n+1}+f)\mathscr{A}_n H_n
+(1-t)d \mathscr{C}_n
-(au_{n+1}+b)H_n \mathscr{A}_n=0.
\end{equation}
Utilizing the above relation and  recalling  $\mathscr{A}_n$
and $\mathscr{C}^{(j)}_n$ satisfy the relation same as that in \eqref{rel2}, 
one finds $H_n$ is expressed as
\begin{align}
H_n=\mathscr{A}^{-1}_n\sum_{j=1}^n
\frac{au_j+b}
        {c(u_j-u_{n+1})} \mathscr{C}_n^{(j)}.
\label{H-matrix}
\end{align}
One thus obtains the diagonal matrix $\mathscr{A}_{n+1}$:
\begin{align}
\mathscr{A}_{n+1}=
\begin{pmatrix}
(au_{n+1}+b)\mathscr{A}_n & 0 \\
0 & (eu_{n+1}+f)\mathscr{A}_n
\end{pmatrix}.
\label{D-matrix}
\end{align}
The remaining task is to derive  $\mathscr{C}_{n+1}^{(j)}$ and
to prove the relations \eqref{rel2}--\eqref{rel4} hold for $n+1$.
Combining  \eqref{reop2}, \eqref{G-matrix} and \eqref{H-matrix},
and also inserting the relations \eqref{rel3} and \eqref{rel4},
one arrives at $\mathscr{C}_{n+1}=\sum_{j=1}^{n+1}\mathscr{C}_{n+1}^{(j)}$
where
\begin{align}
\mathscr{C}_{n+1}^{(j)}=
\begin{cases} \displaystyle
\frac{1}{u_j-u_{n+1}}
\begin{pmatrix}
(u_j-tu_{n+1})(au_{n+1}+b) \mathscr{C}_n^{(j)} & 0 \\
0 & (tu_j-u_{n+1})(eu_{n+1}+f) \mathscr{C}_n^{(j)}
\end{pmatrix}  \\[6mm]
\text{ for $1\le j \le n$} \\[6mm]
\begin{pmatrix}
0  & (1-t)cu_{n+1} \mathscr{A}_n \\
0 & 0
\end{pmatrix}   \text{ for $j=n+1$}
\end{cases}.
\label{C-matrix}
\end{align}
Finally recalling that $\mathscr{A}_n$ and $\mathscr{C}_n^{(j)}$ 
are supposed to
satisfy the relations \eqref{rel2}--\eqref{rel4} and using the explicit
form of $\mathscr{A}_{n+1}$ \eqref{D-matrix} and $\mathscr{C}_{n+1}^{(j)}$ 
\eqref{C-matrix}, one sees they satisfy the same algebraic relations as those 
in \eqref{rel2}--\eqref{rel4} for $n+1$.
\end{proof}
Due to the algebraic relations \eqref{rel2}, \eqref{rel3}
and \eqref{rel4} in Lemma~\ref{algebra}, 
the matrix product form for the wavefunction \eqref{reov} can be rewritten
into the following form.

\begin{proposition} \label{propostionforwavefunctionone}
The wavefunction $\bra 
x_1 \cdots x_N |\psi(\{u\}_N)
\ket$ is expressed in the following form
\begin{align}
\bra 
x_1 \cdots x_N |\psi(\{u\}_N)
\ket
=&K
\sum_{\sigma \in S_N}
\prod_{\substack{1 \le j<k \le N \\ \sigma(j)>\sigma(k)}}
\frac{u_{\sigma(k)}-tu_{\sigma(j)}}{tu_{\sigma(k)}-u_{\sigma(j)}}
\prod_{j=1}^N \Bigg(\frac{eu_{\sigma(j)}+f}{au_{\sigma(j)}+b} \Bigg)^{x_j}.
\label{predet}
\end{align}
Here, $S_N$ denotes the symmetric group of order $N$,
and the prefactor $K$ is given by
\begin{align}
K=
\prod_{j=1}^N \Bigg( \frac{au_j+b}{eu_j+f} \Bigg)^j
\Tr_{W^{\otimes N}}\left[
Q \mathcal{A}_N^{M-N}
\mathcal{C}_N^{(N)}
\dots\mathcal{C}_N^{(1)} \right]. \label{Kdef}
\end{align}
\end{proposition}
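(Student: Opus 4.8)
The plan is to start from the matrix product expression \eqref{reov} and process it purely algebraically using the three relations of Lemma~\ref{algebra}. First I would insert the decomposition $\mathcal{C}_N=\sum_{j=1}^N\mathcal{C}_N^{(j)}$ into each of the $N$ factors $\mathcal{C}_N$ appearing in \eqref{reov}, expanding the trace into a sum over $N$-tuples of indices. The nilpotency relation \eqref{rel3}, $(\mathcal{C}_N^{(j)})^2=0$, immediately annihilates every tuple carrying a repeated index, so only tuples with pairwise distinct indices survive; writing the index attached to the $k$-th factor $\mathcal{C}_N$ (counted from the right) as $\sigma(k)$, the surviving terms are indexed by permutations $\sigma\in S_N$.

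Next I would push all the $\mathcal{A}_N$ factors to the far left using \eqref{rel2}. Writing $\phi_j:=(eu_j+f)/(au_j+b)$, relation \eqref{rel2} says that moving one $\mathcal{A}_N$ leftward past $\mathcal{C}_N^{(j)}$ produces the scalar $\phi_j$. A short count shows that the block of $\mathcal{A}_N$'s immediately to the right of the $k$-th $\mathcal{C}_N$ must cross $\mathcal{C}_N^{(\sigma(N))},\dots,\mathcal{C}_N^{(\sigma(k))}$, and summing exponents (with the convention $x_0:=0$) the total scalar is $\prod_{l=1}^N\phi_{\sigma(l)}^{\,x_l-l}$, while the powers of $\mathcal{A}_N$ combine into $\mathcal{A}_N^{M-N}$. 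After this step the surviving operator string is $\mathcal{C}_N^{(\sigma(N))}\cdots\mathcal{C}_N^{(\sigma(1))}$, which I would reorder into the canonical form $\mathcal{C}_N^{(N)}\cdots\mathcal{C}_N^{(1)}$ using \eqref{rel4}.

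The decisive observation is that the prefactor in \eqref{rel4} factors as $\phi_j/\phi_k$ times $(u_j-tu_k)/(tu_j-u_k)$. The ratio $\phi_j/\phi_k$ is a coboundary: assigning to any ordering the weight $\prod_p\phi_{(\text{index at position }p)}^{-p}$, each transposition changes this weight by exactly $\phi_j/\phi_k$, so the accumulated $\phi$-contribution of the reordering is path-independent and equals the ratio of initial to final weights. Combining this ratio with the factor $\prod_l\phi_{\sigma(l)}^{\,x_l-l}$ from the previous paragraph, the $\sigma$-dependence collapses and I expect to obtain precisely $\prod_l\phi_{\sigma(l)}^{\,x_l}\cdot\prod_m\phi_m^{-m}$: the second product is the $\sigma$-independent prefactor $\prod_j((au_j+b)/(eu_j+f))^j$ of \eqref{Kdef}, while the first reproduces the summand $\prod_j(\phi_{\sigma(j)})^{x_j}$ of \eqref{predet}. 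The remaining $(u_j-tu_k)/(tu_j-u_k)$ parts are genuinely permutation-dependent; sorting $(\sigma(N),\dots,\sigma(1))$ into decreasing order swaps precisely the inverted pairs, and a reindexing shows the product over these inversions equals $\prod_{\substack{1\le j<k\le N\\\sigma(j)>\sigma(k)}}(u_{\sigma(k)}-tu_{\sigma(j)})/(tu_{\sigma(k)}-u_{\sigma(j)})$.

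Assembling the three scalar contributions in front of the single surviving operator $\Tr_{W^{\otimes N}}[Q\,\mathcal{A}_N^{M-N}\mathcal{C}_N^{(N)}\cdots\mathcal{C}_N^{(1)}]$ then yields \eqref{predet} with $K$ as in \eqref{Kdef}. The main obstacle is the bookkeeping in the third paragraph: cleanly separating the coboundary $\phi$-part of \eqref{rel4} from the genuinely $\sigma$-dependent $u$-part, and checking that the coboundary exactly cancels the position-dependent exponents $x_l-l$ to leave the clean prefactor. A minor point to settle is that the reordering scalar is well-defined independently of the chosen sequence of transpositions; this is automatic because \eqref{rel4} is an operator identity, so one may simply fix a single sorting order (e.g.\ bubble sort) and read off the inversion product directly.
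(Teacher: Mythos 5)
Your proposal is correct and is precisely the computation the paper has in mind: the paper merely asserts that \eqref{predet}--\eqref{Kdef} follow from \eqref{reov} together with the relations \eqref{rel2}--\eqref{rel4} of Lemma~\ref{algebra}, and you have carried out exactly that derivation (expansion via nilpotency, commuting the $\mathcal{A}_N$'s out to accumulate $\phi_{\sigma(k)}^{x_k-k}$, and reordering the $\mathcal{C}_N^{(j)}$'s with the coboundary part of \eqref{rel4} cancelling the $-k$ shifts). The bookkeeping checks out, including the identification of the inversion product with the sum over $\sigma$ in \eqref{predet}.
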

What remains to be done to show \eqref{generalizedwavefunction} is
to determine the explicit form of the prefactor $K$ in \eqref{predet}.
From the expressions \eqref{predet} and \eqref{Kdef},
one sees that the information of the particle configuration
$x=(x_1,x_2,\dots,x_N)$
is encoded in the determinant,
while the overall factor $K$ is independent of the configuration.
This fact means that one can determine the factor $K$ by evaluating
the overlap for a particular particle configuration. In fact,
we find the following explicit form of the prefactor $K$
by finding an explicit expression of the wavefunction
$\bra
x_1 \cdots x_N |\psi(\{u\}_N)
\ket$
for the case $x_j=j$ ($1\le j \le N$):
\begin{proposition} \label{propositionforwavefunctiontwo}
The prefactor $K$ in \eqref{predet} is given by
\begin{align}
K=\prod_{j=1}^N \frac{(1-t)cu_j(au_j+b)^M}{eu_j+f}
\prod_{1 \le j < k \le N} \frac{tu_j-u_k}{u_j-u_k}. \label{Kevaluation}
\end{align}
\end{proposition}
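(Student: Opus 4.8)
The plan is to exploit the feature, already exhibited in \eqref{predet} and Proposition~\ref{propostionforwavefunctionone}, that the prefactor $K$ is the \emph{same} for every particle configuration $x=(x_1,\dots,x_N)$: the configuration enters only through the symmetrization sum, while $K$ is common to all $x$. Hence it suffices to compute one convenient wavefunction independently and then read off $K$ by dividing by the (now fully explicit) sum. I would take $x_j=j$, so that the left-hand side becomes $\bra 1\,2\cdots N|\psi(\{u\}_N)\ket$ and the right-hand side of \eqref{predet} is $K$ times the explicit $S_N$-sum with exponents $j$.

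The first substantive step reduces the lattice length $M$ to the particle number $N$. With all $N$ particles sitting on sites $1,\dots,N$, the ice rule freezes the remaining $M-N$ columns into the all-hole configuration: each vertex in row $j$ of those columns carries the empty weight $au_j+b$, so these columns contribute only the configuration-independent factor $\prod_{j=1}^N(au_j+b)^{M-N}$, while the first $N$ columns form an $N\times N$ block with domain wall boundary conditions (all particles incoming on the right, outgoing on the top, holes on the bottom and the left). This gives
\[
\bra 1\,2\cdots N|\psi(\{u\}_N)\ket=\prod_{j=1}^N(au_j+b)^{M-N}\,Z_N(u_1,\dots,u_N),
\]
where $Z_N$ is the domain wall boundary partition function. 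Since the target expression \eqref{Kevaluation} differs from its $M=N$ specialization precisely by the factor $\prod_j(au_j+b)^{M-N}$, this reduces the whole proposition to the case $M=N$. I would then evaluate $Z_N$ using the Izergin--Korepin analysis of the domain wall boundary partition function carried out in the following section (where the constraints \eqref{constraints2} enter), and compare the result with \eqref{predet} at $M=N$, $x_j=j$ to extract $K|_{M=N}$ and hence $K$.

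The step I expect to be the main obstacle is this final matching. The $S_N$-symmetrization sum produced by the matrix-product algebra does not collapse to a product at the staircase $x_j=j$, whereas the Izergin--Korepin method delivers $Z_N$ as a determinant; showing that this determinant equals the product in \eqref{Kevaluation} (specialized to $M=N$) times the explicit staircase sum is therefore a nontrivial rational-function identity. Equivalently, one can verify that the claimed product times the staircase sum satisfies the same symmetry, degree bounds, recursion in $N$, and normalization that characterize $Z_N$ in the Izergin--Korepin argument. In practice the overall constant can be pinned down economically by matching a single, fully determined contribution, such as the $\sigma=\mathrm{id}$ term $\prod_{j=1}^N((eu_j+f)/(au_j+b))^{j}$, which is reproduced by no other permutation; symmetry and the degree bounds then force the full equality.
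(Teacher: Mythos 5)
Your proposal follows essentially the same route as the paper: $K$ is configuration-independent, so one evaluates the staircase wavefunction $\bra 1\cdots N|\psi(\{u\}_N)\ket$, which the ice rule freezes into $\prod_{j=1}^N(au_j+b)^{M-N}$ times the domain wall boundary partition function $Z_N$, and then one identifies $Z_N$. The ``main obstacle'' you anticipate never actually arises, because the paper's Theorem \ref{homogeneoustheorem} establishes $Z_N$ directly in the symmetrization-sum form \eqref{usethisfordetermination2} --- obtained by checking the Izergin--Korepin characterization (symmetry, degree, normalization, recursion) against a sum-form candidate, i.e.\ precisely your ``equivalently'' route --- so the product-times-staircase-sum structure is read off immediately rather than extracted from a determinant.
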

\begin{proof}
We prove Proposition \ref{propositionforwavefunctiontwo}
by showing
\begin{align}
\bra 
1 \cdots N |\psi(\{u\}_N)
\ket
=&
\prod_{j=1}^N \frac{(1-t)cu_j(au_j+b)^M}{eu_j+f}
\prod_{1 \le j < k \le N} \frac{tu_j-u_k}{u_j-u_k} \nonumber \\
&\times \sum_{\sigma \in S_N}
\prod_{\substack{1 \le j<k \le N \\ \sigma(j)>\sigma(k)}}
\frac{u_{\sigma(k)}-tu_{\sigma(j)}}{tu_{\sigma(k)}-u_{\sigma(j)}}
\prod_{j=1}^N \Bigg(\frac{eu_{\sigma(j)}+f}{au_{\sigma(j)}+b} \Bigg)^{j},
\label{usethisfordetermination3}
\end{align}
since combining \eqref{usethisfordetermination3} and
Proposition \ref{propostionforwavefunctionone} for the case
$x_j=j, \ j=1,\dots,N$ gives \eqref{Kevaluation}.

We now begin to evaluate a particular wavefunction
$
\bra 
1 \cdots N |\psi(\{u\}_N)
\ket
$.
From its graphical description, we can easily see that
$
\bra 
1 \cdots N |\psi(\{u\}_N)
\ket
$
can be factorized as (see Figure \ref{picturefreezing})
\begin{align}
\langle 1 \cdots N |\psi(\{u\}_N)
\ket&=Z_N(\{ u \}_N) \prod_{j=1}^N (au_j+b)^{M-N},
\label{usethisfordetermination}
\end{align}
where $Z_N(\{ u \}_N)$ is
the domain wall boundary partition function on an $N \times N$ grid
\begin{align}
Z_N(\{ u \}_N)&=\langle 1 \cdots N|B_N(u_1) \cdots B_N(u_N)| \Omega \rangle,
\\
B_N(u)&={}_a \langle 0|L_{aN}(u) \cdots L_{a1}(u)|1 \rangle_a.
\end{align}

\begin{figure}[ht]
\includegraphics[width=15cm]{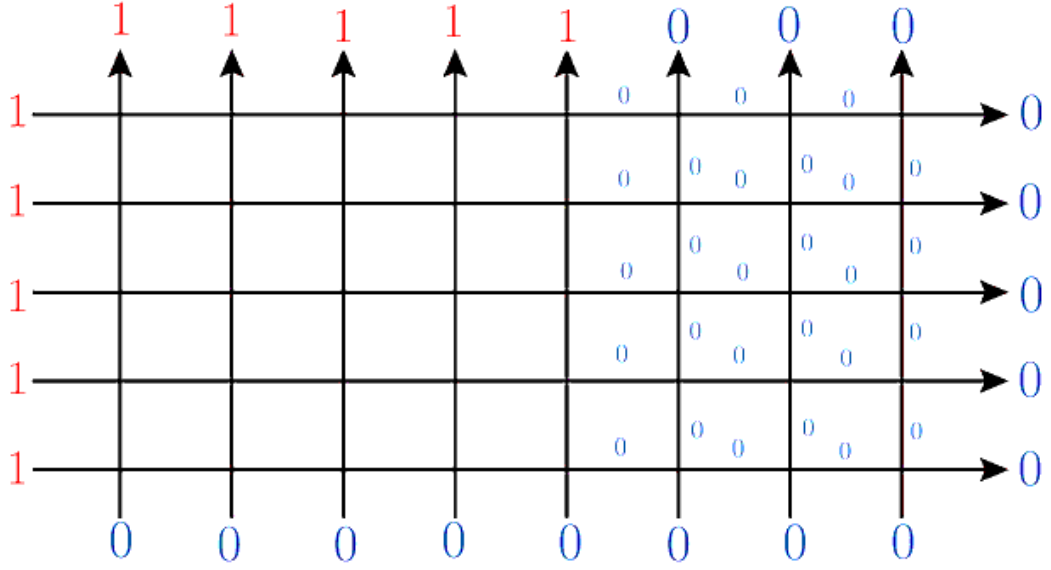}
\caption{A graphical representation
which shows the factorization of the wavefunction
$\langle 1 \cdots N |\psi(\{u\}_N)
\ket=Z_N(\{ u \}_N) \prod_{j=1}^N (au_j+b)^{M-N}$
for the case $M=9$, $N=5$.
One can easily see from its graphical reprensenation
and the ice rule that the inner states
of the left part of the wavefunction freeze,
and the evaluation of this particular type of wavefunctions reduces
to that of the domain wall boundary partition function.
}
\label{picturefreezing}
\end{figure}

One can show that the domain wall boundary partition function
$Z_N(\{ u \}_N)$ has an expression given by
\eqref{usethisfordetermination2}, which will be proven in the next section.
Inserting \eqref{usethisfordetermination2} into
\eqref{usethisfordetermination}, one gets
\begin{align}
\bra 
1 \cdots N |\psi(\{u\}_N)
\ket
=&
\prod_{j=1}^N \frac{(1-t)cu_j(au_j+b)^M}{eu_j+f}
\prod_{1 \le j < k \le N} \frac{tu_j-u_k}{u_j-u_k} \nonumber \\
&\times \sum_{\sigma \in S_N}
\prod_{\substack{1 \le j<k \le N \\ \sigma(j)>\sigma(k)}}
\frac{u_{\sigma(k)}-tu_{\sigma(j)}}{tu_{\sigma(k)}-u_{\sigma(j)}}
\prod_{j=1}^N \Bigg(\frac{eu_{\sigma(j)}+f}{au_{\sigma(j)}+b} \Bigg)^{j},
\end{align}
hence Proposition \ref{propositionforwavefunctiontwo} is proved.
\end{proof}
Having proved Propostitions
\ref{propostionforwavefunctionone} and \ref{propositionforwavefunctiontwo},
it immediately follows from the combination of the two propositions that
the wavefunction
$\bra 
x_1 \cdots x_N |\psi(\{u\}_N)
\ket$
is exactly expressed by the symmetric polynomials 
$G_x(u_1,\dots,u_N)$, hence \eqref{generalizedwavefunction} is proved.
\end{proof}

\section{Domain wall boundary partition function}
In this section, we show the following form
for the domain wall boundary partition function
$Z_N(\{ u \}_N)$ which is used to show \eqref{generalizedwavefunction}
in the last section.
\begin{theorem} \label{homogeneoustheorem}
The domain wall boundary partition function $Z_N(\{ u \}_N)$
has the following form
\begin{align}
Z_N(\{ u \}_N)
=&\prod_{j=1}^N (1-t)cu_j
\prod_{1 \le j < k \le N} \frac{tu_j-u_k}{u_j-u_k} \nonumber \\
&\times \sum_{\sigma \in S_N}
\prod_{\substack{1 \le j<k \le N \\ \sigma(j)>\sigma(k)}}
\frac{u_{\sigma(k)}-tu_{\sigma(j)}}{tu_{\sigma(k)}-u_{\sigma(j)}}
\prod_{j=1}^N(au_{\sigma(j)}+b)^{N-j}
\prod_{j=1}^N(eu_{\sigma(j)}+f)^{j-1}. \label{usethisfordetermination2}
\end{align}
\end{theorem}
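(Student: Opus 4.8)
The plan is to determine $Z_N(\{u\}_N)$ by the Izergin--Korepin method: establish a short list of properties that pin it down uniquely, then verify that the right-hand side of \eqref{usethisfordetermination2} satisfies them, arguing by induction on $N$. The base case $N=1$ is immediate, since $Z_1=\langle 1|B_1(u_1)|\Omega\rangle$ is the single matrix element $[L(u_1)]^{01}_{10}=(1-t)cu_1$, matching the claimed formula. Symmetry of $Z_N$ in $u_1,\dots,u_N$ follows at once from $[B_N(u),B_N(v)]=0$, a consequence of the $RLL$ relation \eqref{RLL}, so it suffices to analyze the dependence on a single spectral parameter, say $u_N$. I would focus on the row carrying $u_N$, which is adjacent to the vacuum $|\Omega\rangle$ and hence has all its lower (quantum) edges in state $0$. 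The ice rule then forbids the $d$-vertex in this row and forces exactly one $c$-vertex (weight $(1-t)cu_N$), the remaining $N-1$ vertices being of type $au_N+b$ or $eu_N+f$; consequently $Z_N$ equals $u_N$ times a polynomial of degree at most $N-1$ in $u_N$.

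The recursion comes from specializing $u_N$ to the two values at which the constraints \eqref{constraints2} force two weights to vanish at once. At $u_N=-b/a$ one has $au_N+b=0$ and $eu_N+tf=0$, so the bottom row freezes to $(eu_N+f)$-vertices in columns $1,\dots,N-1$ and a single $c$-vertex in the leftmost column; the particle it creates propagates straight up that column through frozen $(atu_r+b)$-vertices, detaching a full column and leaving an $(N-1)\times(N-1)$ domain wall configuration. This gives
\begin{align}
Z_N(\{u\}_N)\big|_{u_N=-b/a}=(1-t)cu_N\,(eu_N+f)^{N-1}\prod_{r=1}^{N-1}(atu_r+b)\,Z_{N-1}(u_1,\dots,u_{N-1}).
\end{align}
Symmetrically, at $u_N=-f/e$ one has $eu_N+f=0$ and $atu_N+b=0$, the $c$-vertex sits in the rightmost column, and the created particle rises through frozen $(eu_r+tf)$-vertices, yielding
\begin{align}
Z_N(\{u\}_N)\big|_{u_N=-f/e}=(1-t)cu_N\,(au_N+b)^{N-1}\prod_{r=1}^{N-1}(eu_r+tf)\,Z_{N-1}(u_1,\dots,u_{N-1}).
\end{align}
One then checks, using the explicit permutation sum, that the right-hand side of \eqref{usethisfordetermination2} is divisible by $u_N$, has $u_N$-degree $\le N-1$ after removing that factor, and obeys these same two relations; this last check is itself a nontrivial identity for the symmetrized sum, and I would verify it by peeling off the terms in which the position of $N$ under $\sigma$ realizes the frozen column.

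The hard part is uniqueness. Each freezing relation pins $Z_N$ down only on a single hyperplane $u_N\in\{-b/a,-f/e\}$, and under \eqref{constraints2} the other candidate vanishing loci collapse exactly onto these two points (one computes $-b/a=-tf/e$ and $-f/e=-b/(at)$); so for $N\ge 3$ two values cannot reconstruct a degree-$(N-1)$ polynomial by interpolation. I would resolve this by first inhomogenizing the model, replacing $L_{ak}(u)$ with a column-dependent $L_{ak}(uw_k)$ (which preserves \eqref{RLL} since the shared $w_k$ cancels in $R_{ab}(u_1/u_2)$). The vanishing loci then spread into $N$ distinct specializations $u_Nw_k\in\{-b/a,-f/e\}$, one per column, and the inhomogeneous partition function is uniquely fixed by symmetry, degree, and the resulting $N$-term recursion via the standard Korepin argument, producing the Izergin determinant. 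Taking the homogeneous limit $w_k\to 1$ converts the column products into the powers $(au_{\sigma(j)}+b)^{N-j}(eu_{\sigma(j)}+f)^{j-1}$ of \eqref{usethisfordetermination2}, completing the proof. Thus the main obstacle is not the freezing—which \eqref{constraints2} makes remarkably clean—but securing enough independent specializations for uniqueness, i.e.\ carrying the argument through the inhomogeneous determinant and controlling its homogeneous limit.
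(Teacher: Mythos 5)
Your proposal is correct and follows essentially the same route as the paper: an Izergin--Korepin argument carried out on an inhomogeneous generalization of the partition function (the paper uses $au+bw_j$, $eu+fw_j$, etc.\ in place of your $uw_k$ rescaling, and runs the interpolation in $w_N$ rather than $u_N$, but these are cosmetic differences), with the permutation-sum candidate verified against the recursion by restricting to $\sigma(N)=N$ and the homogeneous limit $w_k\to 1$ taken at the end. Your observation that the two homogeneous vanishing loci collapse under \eqref{constraints2}, so that inhomogenization is genuinely needed for uniqueness when $N\ge 3$, is a correct point that the paper leaves implicit.
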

We show this expression \eqref{usethisfordetermination2}
by generalizing the theorem
to the case of inhomogeneous domain wall boundary partition function.
Namely, we generalize the $L$-opearator
by including inhomogeneous parameters $w_j$
in the quantum space $V_j$, $j=1,\dots,N$
\begin{eqnarray}
L_{aj}(u,w_j)=\left( 
\begin{array}{cccc}
au+bw_j & 0 & 0 & 0 \\
0 & atu+bw_j & (1-t)cu & 0 \\
0 & (1-t)dw_j & eu+fw_j & 0 \\
0 & 0 & 0 & eu+tfw_j
\end{array}
\right), \label{generalizedinhomogeneousloperator}
\end{eqnarray}
and construct an inhomogeneous generalization of the
domain wall boundary partition function
$Z_N(\{ u \}_N|\{ w \}_N)$ which is defined as the following:
\begin{align}
Z_N(\{ u \}_N|\{ w \}_N)
&
=\langle 1 \cdots N|B_N(u_1|\{ w \}_N)
\cdots B_N(u_N|\{ w \}_N)|\Omega \rangle, \\
B_N(u|\{ w \}_N)&
={}_a \langle 0|L_{aN}(u,w_N) \cdots L_{a1}(u,w_1)|1 \rangle_a.
\end{align}
One can show the following expression
for the inhomogeneous domain wall boundary partition function.
\begin{theorem} \label{inhomogeneoustheorem}
The inhomogeneous domain wall boundary partition function
$Z_N(\{ u \}_N|\{ w \}_N)$ has the following form:
\begin{align}
&Z_N(\{ u \}_N,\{ w \}_N) \nonumber \\
=&\prod_{j=1}^N (1-t)cu_j
\prod_{1 \le j < k \le N} \frac{tu_j-u_k}{u_j-u_k} \nonumber \\
&\times \sum_{\sigma \in S_N}
\prod_{\substack{1 \le j<k \le N \\ \sigma(j)>\sigma(k)}}
\frac{u_{\sigma(k)}-tu_{\sigma(j)}}{tu_{\sigma(k)}-u_{\sigma(j)}}
\prod_{1 \le j < k \le N}(au_{\sigma(j)}+bw_k)
\prod_{1 \le k < j \le N}(eu_{\sigma(j)}+fw_k).
\label{inhomogeneoussumrepresentation}
\end{align}
\end{theorem}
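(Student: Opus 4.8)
The plan is to prove Theorem~\ref{inhomogeneoustheorem} by the Izergin--Korepin method, i.e.\ by characterizing $Z_N(\{u\}_N|\{w\}_N)$ through a short list of properties that determine it uniquely, and then checking that the right-hand side of \eqref{inhomogeneoussumrepresentation} has the same properties. The properties I would use are: (i) each row of the lattice consists of $N$ vertices, exactly one of which is the particle-creating weight $(1-t)cu_i$ and the remaining $N-1$ of which are affine in $u_i$, so that $Z_N$ is divisible by $\prod_i(1-t)cu_i$ and $Z_N/\prod_i(1-t)cu_i$ is a polynomial of degree $\le N-1$ in each $u_i$; (ii) $Z_N$ is symmetric in $u_1,\dots,u_N$, which follows from the commutativity $[B_N(u),B_N(v)]=0$ of the $B$-operators, itself a consequence of the $RLL$ relation \eqref{RLL}; (iii) $Z_N$ is symmetric in the inhomogeneities $w_1,\dots,w_N$, which follows from the Yang--Baxter relation \eqref{YBE} by inserting an $R$-matrix between two neighbouring columns and moving it through the row, the boundary states being eigenvectors of the $R$-matrix; and (iv) a recursion reducing $Z_N$ to $Z_{N-1}$ at a distinguished value of one spectral parameter, together with the initial datum $Z_1(u_1|w_1)=(1-t)cu_1$.

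The key lattice input is the recursion in (iv), which I would derive by a freezing argument. Setting $u_N=-fw_1/e$ makes the weight $eu_N+fw_1$ vanish, so the bottom-right vertex (bottom row, first column) can no longer be of pass-through type $eu+fw$, and the ice rule forces it to be the creation vertex $(1-t)cu_N$. This choice propagates: along the bottom row the remaining vertices freeze to the $au+bw$ type, and up the first column they freeze to the $eu+tfw$ type, so that
\begin{align}
Z_N\big|_{u_N=-fw_1/e}=(1-t)cu_N\prod_{j=2}^{N}(au_N+bw_j)\prod_{i=1}^{N-1}(eu_i+tfw_1)\,Z_{N-1}(u_1,\dots,u_{N-1}|w_2,\dots,w_N),\nonumber
\end{align}
the constraints \eqref{constraints2} guaranteeing the internal consistency of the frozen weights. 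Using the symmetry (iii) in $w$, the same recursion holds with $w_1$ replaced by any $w_k$, producing $N$ distinct special values $u_N=-fw_k/e$; since $Z_N/\prod_i(1-t)cu_i$ has degree $\le N-1$ in $u_N$, these $N$ evaluations determine it, and induction on $N$ with the base case $Z_1=(1-t)cu_1$ establishes uniqueness.

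It then remains to check that the right-hand side $F_N$ of \eqref{inhomogeneoussumrepresentation} satisfies (i)--(iv). Polynomiality and the degree bound require showing that the apparent poles of the prefactor $\prod_{j<k}(tu_j-u_k)/(u_j-u_k)$ at $u_j=u_k$ cancel against the permutation sum, and the symmetry in the $u$'s and $w$'s is checked by an adjacent-transposition computation on the summand. The heart of the verification is the recursion: at $u_N=-fw_1/e$ every permutation with $\sigma(1)\neq N$ carries the vanishing factor $eu_N+fw_1$ and drops out, while for the surviving permutations $\sigma(1)=N$ the factor $\prod_{i=1}^{N-1}(eu_i+fw_1)$ combines with the spectral ratios $\prod_{j<N}(u_j-tu_N)/(u_j-u_N)$ to reproduce exactly the frozen column weight $\prod_{i=1}^{N-1}(eu_i+tfw_1)$, the factors $\prod_{k\ge 2}(au_N+bw_k)$ give the frozen row weight, and the residual sum over $S_{N-1}$ is precisely $F_{N-1}(u_1,\dots,u_{N-1}|w_2,\dots,w_N)$. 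A direct check at $N=2$ confirms this cancellation: there the ratio $(u_1-tu_2)/(u_1-u_2)$ turns $e(u_1-u_2)=eu_1+fw_1$ into $e(u_1-tu_2)=eu_1+tfw_1$.

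The step I expect to be the main obstacle is exactly this recursion-matching for the formula: organizing the prefactor ratios, the permutation-sum ratios $\big(u_{\sigma(k)}-tu_{\sigma(j)}\big)/\big(tu_{\sigma(k)}-u_{\sigma(j)}\big)$, and the two inhomogeneity products so that, after the $\sigma(1)\neq N$ terms are discarded, everything collapses to the frozen weights times the $(N-1)$-variable formula. A secondary technical point is establishing the symmetry of $F_N$ cleanly (or, alternatively, comparing $F_N$ and $Z_N$ directly as degree-$\le N-1$ polynomials in $u_N$ at the $N$ points $-fw_k/e$), so that the uniqueness characterization applies. Finally, Theorem~\ref{homogeneoustheorem} follows immediately by specializing all $w_j\to 1$ in \eqref{inhomogeneoussumrepresentation}.
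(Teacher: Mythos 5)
Your overall strategy is the same as the paper's: characterize $Z_N(\{u\}_N|\{w\}_N)$ by an Izergin--Korepin list of properties (polynomiality/degree bound, symmetry, initial condition, a freezing recursion) and then verify that the sum formula \eqref{inhomogeneoussumrepresentation} satisfies them. Your freezing derivation at the corner (bottom row, first column) is correct --- setting $eu_N+fw_1=0$ does force the creation vertex there, freezes the bottom row to $\prod_{j\ge 2}(au_N+bw_j)$ and the first column to $\prod_{i<N}(eu_i+tfw_1)$ --- and your check that the permutations with $\sigma(1)\neq N$ drop out of $F_N$ while the ratio $(u_i-tu_N)/(u_i-u_N)$ converts $eu_i+fw_1$ into $eu_i+tfw_1$ is exactly right. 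The difference from the paper is that you run the interpolation in the spectral parameter $u_N$ at the $N$ points $u_N=-fw_k/e$, whereas the paper runs it in the inhomogeneity $w_N$ (degree $N-1$ in $w_N$, recursion \eqref{recursiondomain} at $w_N=-au_k/b$, extended over $k$ by the $u$-symmetry).

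This transposition creates the one genuine gap in your argument: your property (iii), symmetry of $Z_N$ in $w_1,\dots,w_N$, is indispensable to you (the direct freezing only works at $k=1$, since for $k\ge 2$ the auxiliary particle may already have been deposited before reaching column $k$), but it does not follow from \eqref{YBE} as you claim. Equation \eqref{YBE} is the $RRR$ relation in the auxiliary spaces, and \eqref{RLL} only yields $[B(u),B(v)]=0$, i.e.\ symmetry in the $u_j$. Exchanging two columns requires a separate quantum-space intertwining relation of the form $L_{aj}(u,w_j)L_{ak}(u,w_k)\check{R}_{jk}=\check{R}_{jk}L_{aj}(u,w_k)L_{ak}(u,w_j)$ with $\check{R}$ fixing $|00\rangle$ and $\langle 11|$; such a $\check{R}$ does exist here, but only because the constraints \eqref{constraints2} (e.g.\ $be=taf$) make the $L$-operator a gauge transform of the $R$-matrix, and this must be exhibited and proved --- it is nowhere established in the paper. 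The cleanest repair, which essentially reproduces the paper's proof in mirror image, is to keep your corner freezing but do the degree count and interpolation in $w_1$ instead of $u_N$: column $1$ contains exactly one $w_1$-independent creation vertex, so $Z_N$ has degree $\le N-1$ in $w_1$, and the single freezing identity at $w_1=-eu_k/f$, propagated over $k=1,\dots,N$ by the \emph{free} $u$-symmetry, supplies the $N$ interpolation points without ever invoking $w$-symmetry. (A minor further caveat: your statement that every row contains exactly one creation vertex is literally true only for the bottom row --- higher rows can pick up and re-deposit particles via the weight $(1-t)dw_j$ --- though the divisibility by $\prod_j u_j$ and the degree bounds you need survive this refinement.)
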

Theorem \ref{homogeneoustheorem} follows immediately
from Theorem \ref{inhomogeneoustheorem}
by taking the homogeneous limit of the inhomogeneous parameters 
$w_j=1$, $j=1,\dots,N$.

Theorem \ref{inhomogeneoustheorem}
can be proved by using the standard Izergin-Korepin technique
\cite{Ko,Iz}. See \cite{PRS} for the results for the case of
the elliptic ABF model.
We show the outline of the proof.
The Izergin-Korepin technique is to first show
properties for the inhomogeneous
domain wall boundary partition function
$Z_N(\{ u \}_N|\{ w \}_N)
=\langle 1 \cdots N|B(u_1|\{ w \}_N) \cdots B(u_N|\{ w \}_N)|\Omega \rangle
$ which is given in the proposition below,
with the help of its graphical description.
Then one next finds the unique desired polynomials
satisfying the properties, and conclude that the polynomial
is the exact expression for the domain wall boundary partition function.
\begin{proposition} \label{propertiesfordomainwallboundarypartitionfunction}
The inhomogeneous domain wall boundary partition function
$Z_N(\{ u \}_N|\{ w \}_N)$ satisfies the following properties. \\
\\
 (1) $Z_N(\{ u \}_N|\{ w \}_N)$ is a polynomial of degree $N-1$ in $w_N$.
\\
 (2) $Z_N(\{ u \}_N|\{ w \}_N)$ is symmetric
with respect to $u_j$, $j=1,\dots,N$.
\\
(3) The case $n=1$ is given by $Z_1(u_1|w_1)=(1-t)cu_1$.@\\
(4) The following recursive relations between the
domain wall boundary partition functions hold
(Figure \ref{picturerecursiverelationdomain}):
\begin{align}
Z_N(\{ u \}_N|\{ w \}_N)|_{w_N=-au_k/b}
=&(1-t)ca^{N-1}u_k \prod_{\substack{j=1 \\ j \neq k}}^{N}(tu_j-u_k)\prod_{j=1}^{N-1}(eu_k+fw_j)
\nonumber \\
&\times Z_{N-1}(\{u_1,\dots,u_{k-1},u_{k+1},\dots,u_N \}|\{ w \}_{N-1}). \label{recursiondomain}
\end{align}
\end{proposition}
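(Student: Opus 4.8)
The plan is to establish the four stated properties of the inhomogeneous domain wall boundary partition function $Z_N(\{u\}_N|\{w\}_N)$ by combining direct inspection of the $L$-operator weights with a careful reading of the graphical (lattice) description, since the Izergin--Korepin method reduces everything to these four characterizing properties. First I would address property (2), the symmetry in the $u_j$. This follows from the $RLL$ relation \eqref{RLL}: since the $R$-matrix intertwines products $L_{aj}(u_1)L_{bj}(u_2)$, the operators $B_N(u|\{w\}_N)$ obtained by taking the $(0,1)$ auxiliary matrix element satisfy the commutation relation $[B_N(u),B_N(v)]=0$ (the off-diagonal commutativity that already underlies the symmetry of the wavefunctions discussed after Theorem \ref{theoremwavefunctions}). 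Hence the ordering of the $B_N(u_j)$ in the definition of $Z_N$ is immaterial, giving the claimed symmetry.

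Next I would verify properties (1) and (3), the degree and the base case, which are essentially bookkeeping. For property (3), with $N=1$ the partition function is the single matrix element $\langle 1|B_1(u_1|w_1)|0\rangle={}_1\langle 1|{}_a\langle 0|L_{a1}(u_1,w_1)|1\rangle_a|0\rangle_1$, which by the explicit entry of \eqref{generalizedinhomogeneousloperator} equals $(1-t)cu_1$. For property (1), the dependence on $w_N$ enters only through the $L$-operators acting on the quantum space $V_N$, i.e.\ the single column $j=N$ of the lattice; by the ice rule exactly $N$ vertices lie in that column across the $N$ auxiliary rows, but the domain wall boundary forces the configurations so that $w_N$ can appear at most to the power $N-1$. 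I would make this precise by observing that in the top row the boundary condition fixes the vertex type (eliminating one power of $w_N$), so the total degree in $w_N$ is bounded by $N-1$; polynomiality in $w_N$ is clear since every Boltzmann weight in \eqref{generalizedinhomogeneousloperator} is linear in $w_j$.

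The main obstacle, as usual in the Izergin--Korepin scheme, will be property (4), the recursion relation \eqref{recursiondomain}. The idea is to specialize $w_N=-au_k/b$, which makes the weight $au_k+bw_N$ of the $(0,0)$-vertex in the $k$-th auxiliary row and the $N$-th column vanish. I would argue from the graphical description that this specialization freezes a string of vertices: the vanishing weight, together with the domain wall boundary conditions and the ice rule, forces a unique configuration along the $k$-th row and the $N$-th column, so that the lattice decouples into a frozen part and a reduced $(N-1)\times(N-1)$ domain wall lattice with $u_k$ and $w_N$ removed. Collecting the Boltzmann weights of the frozen vertices produces the explicit prefactor $(1-t)ca^{N-1}u_k\prod_{j\neq k}(tu_j-u_k)\prod_{j=1}^{N-1}(eu_k+fw_j)$, leaving the factor $Z_{N-1}$. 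The delicate points are tracking exactly which weights survive after the specialization and confirming that the frozen configuration is genuinely unique; I expect the constraints \eqref{constraints2} on $a,b,c,d,e,f$ to be used here to ensure the various weights combine into the stated closed form.

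Once the four properties are established, the proof concludes in the standard way: one checks that the explicit expression \eqref{inhomogeneoussumrepresentation} satisfies all four properties, and that these properties determine $Z_N$ uniquely. Uniqueness follows because the symmetry in $u_j$ together with the degree bound in each $w_j$ (by symmetry, property (1) holds for every $w_j$) means the function is fixed by its values at the $N$ specialization points $w_N=-au_k/b$, $k=1,\dots,N$, whence the recursion \eqref{recursiondomain} determines $Z_N$ from $Z_{N-1}$ by induction on $N$, with the base case given by property (3). Verifying that \eqref{inhomogeneoussumrepresentation} reproduces the recursion is a finite computation using the symmetrized sum over $S_N$, and taking $w_j=1$ then yields Theorem \ref{homogeneoustheorem}.
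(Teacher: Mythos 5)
Your proposal follows the same route as the paper, which itself only asserts that properties (1)--(4) follow ``with the help of its graphical description'' and then runs the standard Izergin--Korepin uniqueness argument; your treatment of (2) via $[B_N(u),B_N(v)]=0$, of (3) by direct inspection, and of (4) by the freezing of the $k$-th row and $N$-th column after killing the weight $au_k+bw_N$ is exactly the intended argument (and, as you suspect one might need but in fact does not, the constraints \eqref{constraints2} play no role in collecting the frozen weights --- they multiply directly to the stated prefactor).

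One detail in your proof of (1) is wrong as stated: the top-row vertex in column $N$ is \emph{not} fixed by the boundary condition, since with outgoing quantum index $1$ and outgoing auxiliary index $0$ both the weight $(1-t)cu_1$ and the weight $atu_1+bw_N$ remain admissible under the ice rule. The correct counting is that, because the quantum state in column $N$ runs from $0$ at the bottom to $1$ at the top, the number of creation vertices of weight $(1-t)cu_j$ in that column exceeds the number of annihilation vertices by exactly one, so at least one of the $N$ vertices in the column carries a weight independent of $w_N$; since every other weight is linear in $w_N$, the degree is at most $N-1$. This is an easy repair and does not affect the rest of your argument.
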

One can show that the following polynomial
satisfies the properties (1),(2),(3),(4) of Proposition
\ref{propertiesfordomainwallboundarypartitionfunction}
\begin{align}
&F_N(\{ u \}_N,\{ w \}_N) \nonumber \\
=&\prod_{j=1}^N (1-t)cu_j
\prod_{1 \le j < k \le N} \frac{tu_j-u_k}{u_j-u_k} \nonumber \\
&\times \sum_{\sigma \in S_N}
\prod_{\substack{1 \le j<k \le N \\ \sigma(j)>\sigma(k)}}
\frac{u_{\sigma(k)}-tu_{\sigma(j)}}{tu_{\sigma(k)}-u_{\sigma(j)}}
\prod_{1 \le j < k \le N}(au_{\sigma(j)}+bw_k)
\prod_{1 \le k < j \le N}(eu_{\sigma(j)}+fw_k). \label{forexplanation}
\end{align}
For example, let us consider the property (4).
If one sets $w_N$ to $w_N=-au_N/b$, each of the summands
labeled by the elements $\sigma \in S_N$ not satisfying
$\sigma(N)=N$ in the summation of \eqref{forexplanation}
always has a zero factor $\prod_{1 \le j < k \le N}(au_{\sigma(j)}+bw_k)$=0.
Thus, one can restrict the summation to the elements
$\sigma$ which satisfy $\sigma(N)=N$.
Then it is easy to check that the polynomial $F_N(\{ u \}_N,\{ w \}_N)$
satisfies the recursive relation \eqref{recursiondomain}
for the case $k=N$.
Thus we have proved that
the inhomogeneous domain wall boundary partition function
$Z_N(\{ u \}_N|\{ w \}_N)$ is given by the polynomial $F_N(\{ u \}_N,\{ w \}_N)$.

\begin{figure}[ht]
\includegraphics[width=15cm]{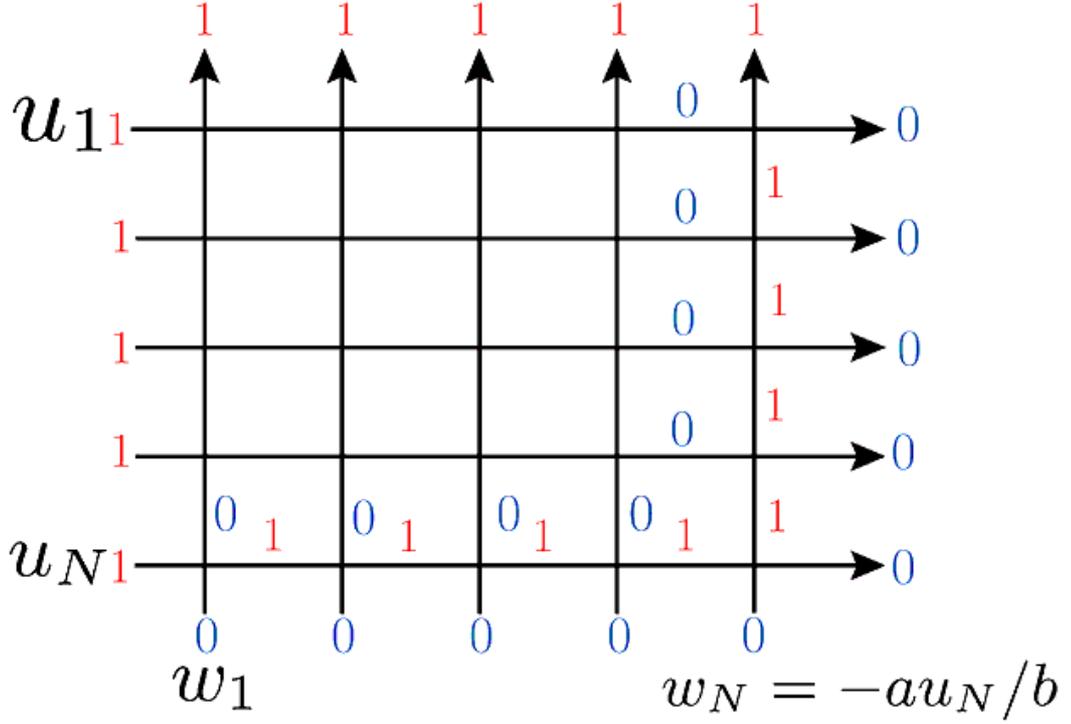}
\caption{A graphical representation
of the recursive relation of the domain wall boundary partition function
\eqref{recursiondomain} for the case $k=N$.}
\label{picturerecursiverelationdomain}
\end{figure}

\section{Pairing formulas between the symmetric polynomials}
In the following two sections, we make applications
of the correspondences between the wavefunctions and the symmetric polynomials.
In this section, we prove a pairing formula between the symmetric polynomials
$G_x(\{ u \})$ and $H_{\overline{x}}(\{ u \})$.
First, we start from the Izergin-Korepin determinant formula \cite{Ko,Iz}
of the domain wall boundary partition function $Z_N(\{ u \}_N|\{ w \}_N)$.
\begin{theorem}
The domain wall boundary partition function
$Z_N(\{ u \}_N|\{ w \}_N)$ can be expressed as the
following determinant
\begin{align}
Z_N(\{ u \}_N|\{ w \}_N)
=&\frac{\prod_{j=1}^N (1-t)cu_j \prod_{j,k=1}^N(au_j+bw_k)(eu_j+fw_k)}
{(cd)^{N(N-1)/2} \prod_{1 \le j < k \le N}(u_j-u_k)(w_k-w_j)} \nonumber \\
&\times \mathrm{det}_N
\Bigg( \frac{1}{(au_j+bw_k)(eu_j+fw_k)} \Bigg).
\label{determinantinhomogeneous}
\end{align}
\end{theorem}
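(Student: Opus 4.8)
The plan is to prove the Izergin--Korepin determinant formula \eqref{determinantinhomogeneous} by showing that its right-hand side satisfies the same four characterizing properties (1)--(4) listed in Proposition \ref{propertiesfordomainwallboundarypartitionfunction}, and then invoking the uniqueness of the polynomial determined by those properties. Since Theorem \ref{inhomogeneoustheorem} already establishes that $Z_N(\{u\}_N|\{w\}_N)$ is the unique polynomial satisfying (1)--(4), it suffices to verify that the determinant expression on the right-hand side of \eqref{determinantinhomogeneous} also enjoys these properties; both must then coincide. Denote the right-hand side of \eqref{determinantinhomogeneous} by $D_N(\{u\}_N|\{w\}_N)$.

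First I would address the polynomiality and degree in $w_N$ (property (1)). The apparent poles of $D_N$ in $w_N$ come from the prefactor denominator $\prod_{1\le j<k\le N}(w_k-w_j)$ and from the matrix entries $1/((au_j+bw_k)(eu_j+fw_k))$. The factors $(au_j+bw_N)(eu_j+fw_N)$ appearing in the numerator product $\prod_{j,k}(au_j+bw_k)(eu_j+fw_k)$ clear the denominators of the $k=N$ column of the determinant, and expanding the determinant along that column one checks that the residues at $w_N=w_j$ ($j<N$) cancel, so $D_N$ is genuinely polynomial; a careful count of the surviving powers of $w_N$ gives degree $N-1$. Property (2), symmetry in the $u_j$, is immediate: permuting two spectral parameters $u_j,u_{j'}$ swaps two rows of the determinant (contributing a sign) and simultaneously swaps the corresponding factors in $\prod_{1\le j<k\le N}(u_j-u_k)$ (contributing the compensating sign), so $D_N$ is invariant. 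Property (3), the initial case $D_1(u_1|w_1)=(1-t)cu_1$, follows by direct substitution into the $N=1$ determinant.

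The main work, and the step I expect to be the principal obstacle, is the recursion (4). Here I would set $w_N=-au_k/b$ and show that $D_N$ reduces to the stated multiple of $D_{N-1}$. Under this specialization the column vectors of the matrix and the numerator/denominator prefactors degenerate in a coordinated way: the substitution $w_N=-au_k/b$ forces $au_k+bw_N=0$, which makes the $(k,N)$ entry of the matrix $1/((au_k+bw_N)(eu_k+fw_N))$ singular while the numerator factor $\prod_{j}(au_j+bw_N)$ vanishes for $j=k$. The plan is to extract the finite limit by a Laplace-type expansion: the vanishing numerator factor $(au_k+bw_N)$ cancels exactly against the singular matrix entry in the $k$-th row, collapsing the determinant so that the $N$-th column selects the $k$-th row and the remaining $(N-1)\times(N-1)$ minor reproduces the Izergin--Korepin form of $Z_{N-1}$ on the parameter set $\{u_1,\dots,u_{k-1},u_{k+1},\dots,u_N\}$. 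Matching the residual prefactors---the surviving products $\prod_{j\neq k}(tu_j-u_k)$, $(1-t)ca^{N-1}u_k$, and $\prod_{j=1}^{N-1}(eu_k+fw_j)$---against the recursion \eqref{recursiondomain} is the delicate bookkeeping that constitutes the heart of the argument. By the symmetry established in (2) it suffices to treat $k=N$, which streamlines this computation considerably. Once (1)--(4) are verified for $D_N$, uniqueness forces $D_N=Z_N(\{u\}_N|\{w\}_N)$, completing the proof.
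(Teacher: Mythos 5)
Your proposal follows exactly the paper's route: the paper proves this theorem by remarking that the determinant expression satisfies Properties (1)--(4) of Proposition \ref{propertiesfordomainwallboundarypartitionfunction}, which uniquely characterize $Z_N(\{u\}_N|\{w\}_N)$ (the same Izergin--Korepin argument used for the sum representation). Your verification of the four properties, including the reduction of the recursion to the case $k=N$ via the symmetry in the $u_j$, matches the paper's intended argument and is correct.
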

This determinant representation \eqref{determinantinhomogeneous}
is more famous than the one \eqref{inhomogeneoussumrepresentation}
in the last section.
This can also be proven by showing that
\eqref{determinantinhomogeneous} satisfies the Properties
(1), (2), (3), (4) of Lemma
\ref{propertiesfordomainwallboundarypartitionfunction}.

\begin{figure}[ht]
\includegraphics[width=15cm]{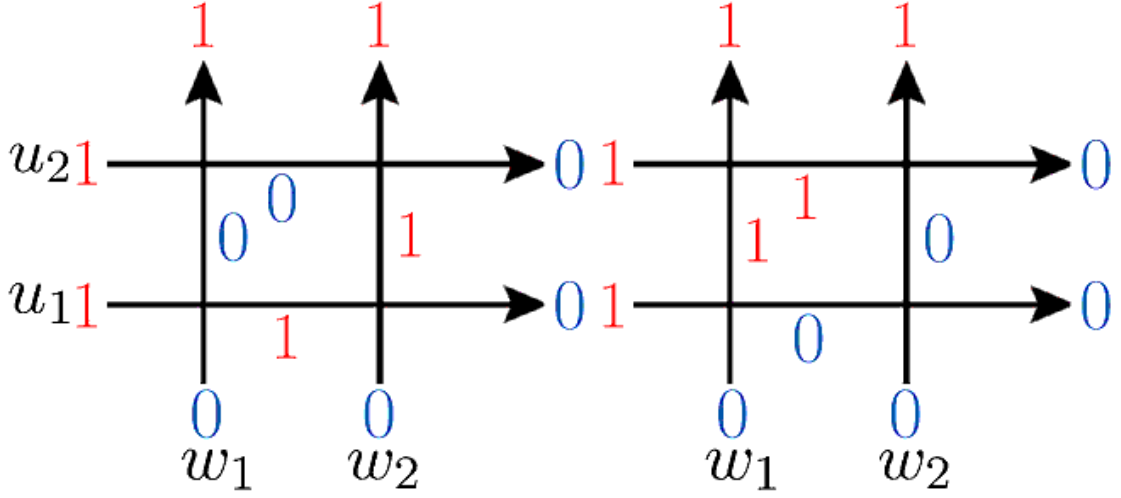}
\caption{The state on the left and right makes a contribution of a factor
$(1-t)cu_2(atu_2+bw_2)(eu_1+fw_1)(1-t)cu_1$
and $(eu_2+tfw_1)(1-t)cu_1(1-t)cu_2(au_1+bw_2)$ respectively
to the inhomogeneous domain wall boundary partition function
$Z_2(\{u_1,u_2\}|\{w_1,w_2\})$.}
\label{picturecontributionfour}
\end{figure}

{\bf Example}
By using the definition of the $L$-operator,
one can calculate the inhomogeneous domain
wall boundary partition function $Z_2(\{u_1,u_2\}|\{w_1,w_2\})$
as (see Figure \ref{picturecontributionfour})
$Z_2(\{u_1,u_2\}|\{w_1,w_2\})=(L.H.S)
=(1-t)^2 c^2 u_1 u_2
((atu_2+bw_2)(eu_1+fw_1)+(eu_2+tfw_1)(au_1+bw_2))$.
The right hand side of \eqref{determinantinhomogeneous}
is
$(R.H.S)=(1-t)^2 c d^{-1} u_1 u_2
((be+af)(ae u_1 u_2+bf w_1 w_2)+abef(u_1+u_2)(w_1+w_2))
$, and one can check the difference becomes
\begin{align}
(L.H.S)-(R.H.S)
&=(1-t)^2 c d^{-1} u_1 u_2
(
(cd+be+af+tcd)(aeu_1 u_2+bfw_1 w_2) \nonumber \\
+&af(be+tcd)(u_1+u_2)w_1
+be(cd+af)(u_1+u_2)w_2
),
\end{align}
which is zero due to the relations
$cd+af=0$ and $tcd+be=0$.
\\

By taking the homogeneous limit of the determinant representation
\eqref{determinantinhomogeneous} following Izergin-Coker-Korepin \cite{ICK},
 one gets the following determinant form
for the partition function without inhomogeneous parameters.
\begin{proposition} \label{homogeneousexpression}
The homogeneous limit of the determinant representation
of the domain wall boundary partition function
is expressed as the following determinant
\begin{align}
Z_N(\{ u \}_N)=\frac{\mathrm{det}_N
((au_j+b)^N(-f)^k(eu_j+f)^{N-k}-(eu_j+f)^N(-b)^k(au_j+b)^{N-k})}
{c^{N(N-1)/2} d^{N(N+1)/2} \prod_{1 \le j < k \le N}(u_j-u_k)}.
\label{determinanthomogeneous}
\end{align}
\end{proposition}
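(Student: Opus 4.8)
The plan is to obtain the homogeneous determinant \eqref{determinanthomogeneous} by taking the limit $w_j \to 1$ for all $j$ in the inhomogeneous Izergin-Korepin determinant \eqref{determinantinhomogeneous}, following the Izergin-Coker-Korepin procedure \cite{ICK}. The obstacle is that this limit is naively $0/0$: the prefactor contains $\prod_{1\le j<k\le N}(w_k-w_j)$ in the denominator, which vanishes as all $w_j$ coincide, while the determinant $\mathrm{det}_N\bigl(1/((au_j+bw_k)(eu_j+fw_k))\bigr)$ also degenerates because its columns become identical in that limit. The standard resolution is to treat the $w_k$ as distinct variables approaching a common value and extract the leading behaviour via a Taylor/confluent expansion, turning the ordinary determinant into one built from derivatives with respect to a single variable.

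Concretely, I would first set $w_k = 1 + \epsilon_k$ and regard each column of the matrix $M_{jk} = 1/((au_j+bw_k)(eu_j+fw_k))$ as a function $g_j(w_k)$ of the single argument $w_k$. The antisymmetry of the determinant in the columns lets me replace column $k$ by a suitable linear combination (divided differences) of columns, so that as $w_1,\dots,w_N \to 1$ the matrix entry in column $k$ converges to $\tfrac{1}{(k-1)!}\,\partial_w^{\,k-1} g_j(w)\big|_{w=1}$, up to the usual Vandermonde factor $\prod_{1\le j<k\le N}(w_k-w_j)$ that precisely cancels the singular denominator in the prefactor of \eqref{determinantinhomogeneous}. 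Thus the limit is finite and equals, up to the surviving prefactor, the confluent determinant $\mathrm{det}_N\bigl(\tfrac{1}{(k-1)!}\partial_w^{\,k-1} g_j(w)|_{w=1}\bigr)$.

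The remaining work is to evaluate these derivatives explicitly and recast the resulting determinant in the stated polynomial form. Writing $g_j(w) = 1/\bigl((au_j+bw)(eu_j+fw)\bigr)$, a partial-fraction decomposition $g_j(w) = \tfrac{1}{be-af}\cdot\tfrac{?}{\,\cdots\,}$ (using the constraints \eqref{constraints2}, so that $af=-cd$ and $be=-tcd$, hence $be-af=(1-t)cd\neq 0$) separates $g_j$ into two simple poles, making $\partial_w^{\,k-1}g_j$ a combination of $(au_j+b)^{-k}$ and $(eu_j+f)^{-k}$ with explicit constants. Substituting into the confluent determinant, clearing the column-wise common factors $\prod_j (au_j+b)^{-N}(eu_j+f)^{-N}$ against the numerator product $\prod_{j,k}(au_j+b)(eu_j+f)$ in the prefactor, and collecting the powers of $b,f,c,d$ into the overall normalization $c^{N(N-1)/2}d^{N(N+1)/2}$, should reproduce the matrix entry $(au_j+b)^N(-f)^k(eu_j+f)^{N-k}-(eu_j+f)^N(-b)^k(au_j+b)^{N-k}$. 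The main obstacle I anticipate is the bookkeeping in this final step: correctly tracking the factorials, the signs coming from $(-b)^k$ and $(-f)^k$, and the precise power of $cd$ that accumulates, so that the normalizing constant matches \eqref{determinanthomogeneous} exactly. I would verify the normalization against the $N=1$ and $N=2$ cases, the latter being consistent with the explicit $Z_2$ computed in the Example above.
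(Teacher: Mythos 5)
Your proposal is correct and follows essentially the same route as the paper: the Izergin--Coker--Korepin confluent limit $w_j\to 1$ with Taylor expansion cancelling the Vandermonde $\prod(w_k-w_j)$, combined with the partial-fraction splitting of $1/((au_j+bw)(eu_j+fw))$ via $be-af=(1-t)cd$, yielding entries built from $(au_j+b)^{-k}$ and $(eu_j+f)^{-k}$ and then the stated normalization. The paper merely performs the partial fractions before taking the successive limits, which is an ordering difference, not a different argument.
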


\begin{proof}
Let us first examine
\begin{align}
&\frac{1}{\prod_{1 \le j < k \le N}(w_k-w_j)}
\mathrm{det}_N \Bigg( \frac{1}{(au_j+bw_k)(eu_j+fw_k)} \Bigg).
\label{beforetakinglimit}
\end{align}
We rewrite the matrix elements
$\displaystyle \frac{1}{(au_j+bw_k)(eu_j+fw_k)}$ of the determinant.
Assuming $c \neq 0$ and $d \neq 0$ and using $be-af=(1-t)cd$,
one finds the following equality
\begin{align}
\frac{1}{(au_j+bw_k)(eu_j+fw_k)}
=\frac{1}{(1-t)cdu_j} \frac{b}{au_j+bw_k}
-\frac{1}{(1-t)cdu_j} \frac{f}{eu_j+fw_k},
\end{align}
and \eqref{beforetakinglimit} becomes
\begin{align}
&\frac{1}{\prod_{1 \le j < k \le N}(w_k-w_j)}
\mathrm{det}_N \Bigg( \frac{1}{(au_j+bw_k)(eu_j+fw_k)} \Bigg)
\nonumber \\
=&\frac{1}{((1-t)cd)^N \prod_{j=1}^N u_j \prod_{1 \le j < k \le N}(w_k-w_j)}
\mathrm{det}_N \Bigg( \frac{b}{bw_k+au_j}-\frac{f}{fw_k+eu_j} \Bigg).
\end{align}
Taking the limit $w_1 \to 1$, $w_2 \to 1,\dots$, $w_N \to 1$ successively,
one gets the following expression with the help of Taylor expansion
\begin{align}
&\lim_{w_1,\dots,w_N \to 1}\frac{1}{\prod_{1 \le j < k \le N}(w_k-w_j)}
\mathrm{det}_N \Bigg( \frac{1}{(au_j+bw_k)(eu_j+fw_k)} \Bigg)
\nonumber \\
=&\frac{1}{((1-t)cd)^N \prod_{j=1}^N u_j}
\mathrm{det}_N \Bigg( \frac{f^k}{(-f-eu_j)^k}-\frac{b^k}{(-b-au_j)^k} \Bigg).
\end{align}
Taking the remaining factors into account,
one has the homogeneous limit of the partition function
\begin{align}
Z_N(\{ u \}_N)=&\frac{\prod_{j=1}^N (1-t)cu_j (au_j+b)^N (eu_j+f)^N}
{(cd)^{N(N-1)/2} \prod_{1 \le j < k \le N}(u_j-u_k)} \nonumber \\
&\times \frac{1}{((1-t)cd)^N \prod_{j=1}^N u_j}
\mathrm{det}_N \Bigg( \frac{f^k}{(-f-eu_j)^k}-\frac{b^k}{(-b-au_j)^k} \Bigg)
\nonumber \\
=&\frac{\mathrm{det}_N
((au_j+b)^N(-f)^k(eu_j+f)^{N-k}-(eu_j+f)^N(-b)^k(au_j+b)^{N-k})}
{c^{N(N-1)/2} d^{N(N+1)/2} \prod_{1 \le j < k \le N}(u_j-u_k)}.
\end{align}
\end{proof}

{\bf Example}
Let us check the case $N=2$.
Using the relations $af=-cd$, $be=-tcd$,
the right hand side of \eqref{determinanthomogeneous}
can be rewritten as
\begin{align}
&-(be-af)^2 (cd^3)^{-1} u_1 u_2
(bf(be+af)+2abef(u_1+u_2)+ae(be+af)u_1 u_2) \nonumber \\
=&-(t-1)^2 c^2 d^2 (cd^3)^{-1} u_1 u_2
(-(t+1)bfcd+(-cdbe-tcdaf)(u_1+u_2)-(t+1)aecd u_1 u_2) \nonumber \\
=&(t-1)^2 c^2 u_1 u_2 ((t+1)bf+(be+taf)(u_1+u_2)+(t+1)aeu_1u_2)
\nonumber \\
=&(1-t)^2 c^2 u_1 u_2
\{(atu_2+b)(eu_1+f)+(eu_2+tf)(au_1+b) \},
\end{align}
which finally becomes the expression
of $Z_2(\{ u_1,u_2 \})$ calculated
from the definition of the $L$-operator. \\

Now we can prove the following pairing formula
for the symmetric polynomials.

\begin{theorem}
We have the following pairing formula between the
symmetric polynomials $G_x(u_{M-N+1},\dots,u_M)$
and $H_{\overline{x}}(u_1,\dots,u_{M-N})$
\begin{align}
&\sum_x H_{\overline{x}}(u_1,\dots,u_{M-N}) G_x(u_{M-N+1},\dots,u_M)
\nonumber \\
=&\frac{\mathrm{det}_N
((au_j+b)^N(-f)^k(eu_j+f)^{N-k}-(eu_j+f)^N(-b)^k(au_j+b)^{N-k})}
{c^{N(N-1)/2} d^{N(N+1)/2} \prod_{1 \le j < k \le N}(u_j-u_k)}. \label{pairing}
\end{align}
Here, for each term of the product between
$G_x(u_{M-N+1},\dots,u_M)$ and
$H_{\overline{x}}(u_1,\dots,u_{M-N})$,
the hole configuration $\overline{x}$ of
$H_{\overline{x}}(u_1,\dots,u_{M-N})$ is the complementary part
of the particle configuration $x$ of $G_x(u_{M-N+1},\dots,u_M)$.
That is, the particle configuration $x=\{ x_1,\dots, x_N \}$
and the hole configuration $\overline{x}=\{ \overline{x_1} \dots
\overline{x_{M-N}} \}$ forms a disjoint union of $\{1,2,\dots,N \}$,
$x \sqcup \overline{x}=\{1,2\dots,N \}$.
The sum in the left hand side of \eqref{pairing}
is over all particle configurations $x=(1 \le x_1 < x_2 < \cdots < x_N \le M)$.
\end{theorem}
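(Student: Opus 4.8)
The plan is to read both sides of \eqref{pairing} as two evaluations of one and the same domain wall boundary partition function on the full $M\times M$ lattice,
\begin{align}
Z_M(\{u\}_M)=\langle 1\cdots M|B(u_1)\cdots B(u_M)|\Omega\rangle,
\end{align}
in which all $M$ quantum sites are used and the $M$ operators $B$ carry the empty state $|\Omega\rangle$ to the fully occupied dual state $\langle 1\cdots M|$. The right hand side of \eqref{pairing} is then nothing but the determinant expression for this quantity furnished by Proposition \ref{homogeneousexpression} applied with lattice size $M$, so that half of the identity is already available and requires no additional computation.

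To recover the left hand side I would split the product of $B$-operators into the first $M-N$ and the last $N$ factors and insert a resolution of the identity on the $N$-particle sector. Since $B(u_{M-N+1})\cdots B(u_M)|\Omega\rangle$ is an $N$-particle state, the completeness relation $\sum_x|x_1\cdots x_N\rangle\langle x_1\cdots x_N|$ (summed over $1\le x_1<\cdots<x_N\le M$) acts as the identity on it, and one obtains
\begin{align}
Z_M(\{u\}_M)=\sum_x\langle 1\cdots M|B(u_1)\cdots B(u_{M-N})|x_1\cdots x_N\rangle\,\langle x_1\cdots x_N|B(u_{M-N+1})\cdots B(u_M)|\Omega\rangle.
\end{align}
Using the commutativity of the $B$-operators and the symmetry of the wavefunction, the second factor is exactly $G_x(u_{M-N+1},\dots,u_M)$ by \eqref{generalizedwavefunction} in Theorem \ref{theoremwavefunctions}.

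The crucial observation for the first factor is that, as a vector in $V_1\otimes\cdots\otimes V_M$, the particle configuration $|x_1\cdots x_N\rangle$ with occupied sites $x$ coincides with the hole configuration $|\overline{x_1}\cdots\overline{x_{M-N}}\rangle$ whose holes sit at the complementary positions $\overline{x}=\{1,\dots,M\}\setminus x$. Under this identification the first factor becomes $\langle 1\cdots M|B(u_1)\cdots B(u_{M-N})|\overline{x_1}\cdots\overline{x_{M-N}}\rangle$, which is precisely the third wavefunction and equals $H_{\overline{x}}(u_1,\dots,u_{M-N})$ by \eqref{generalizedwavefunctionthree}; note that here the $B$-operators read left-to-right as $u_1,\dots,u_{M-N}$, so the variable labels match the arguments of $H_{\overline{x}}$ automatically. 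Assembling the two factors reproduces $\sum_x H_{\overline{x}}(u_1,\dots,u_{M-N})G_x(u_{M-N+1},\dots,u_M)$, giving the left hand side and closing the proof.

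The work here is bookkeeping rather than hard analysis. One must confirm that the intermediate matrix element forces exactly the $N$-particle configurations, so that no spurious terms enter the sum; that the complementary hole set has the correct cardinality $M-N$, so that the number of $B$-operators matches the number of holes required by \eqref{generalizedwavefunctionthree}; and that the spectral parameters are correctly distributed, with $u_1,\dots,u_{M-N}$ feeding the $H$ factor and $u_{M-N+1},\dots,u_M$ feeding the $G$ factor. I expect the main obstacle to be precisely the vector identification $|x_1\cdots x_N\rangle=|\overline{x_1}\cdots\overline{x_{M-N}}\rangle$ together with the resulting variable assignment, since once it is in place the remaining steps follow immediately from Theorem \ref{theoremwavefunctions} and Proposition \ref{homogeneousexpression}.
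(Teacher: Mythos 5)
Your proposal is correct and follows essentially the same route as the paper: both evaluate the full $M\times M$ domain wall boundary partition function $\langle 1\cdots M|B(u_1)\cdots B(u_M)|\Omega\rangle$ once via the homogeneous Izergin--Korepin determinant and once by inserting the completeness relation after the first $M-N$ $B$-operators, identifying $|x_1\cdots x_N\rangle$ with the complementary hole vector $|\overline{x_1}\cdots\overline{x_{M-N}}\rangle$, and invoking \eqref{generalizedwavefunction} and \eqref{generalizedwavefunctionthree}. Your reading of the right hand side as the size-$M$ determinant is the intended one (the paper's displayed formula writes $N$ where $M$ is meant), so no gap remains.
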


\begin{proof}
The theorem can be shown by combining the two expressions
for the domain wall boundary partition function $Z_N(\{ u \}_N)$.
From Proposition \ref{homogeneousexpression},
one has the direct determinant representation \eqref{determinanthomogeneous}.
Another way of evaluating the domain wall boundary partition function
is to insert the completeness relation
\begin{align}
\sum_{\{ x \}}|x_1 \cdots x_N \rangle \langle x_1 \cdots x_N |=\mathrm{Id},
\end{align}
between the $B$-operators to get
\begin{align}
&\langle 1 \cdots M|
B(u_1) \cdots B(u_M)| \Omega \rangle \nonumber \\
=&\sum_{\{x \}} \langle 1 \cdots M|B(u_1) \cdots
B(u_{M-N}) |x_1 \cdots x_N \rangle \langle x_1 \cdots x_N|
B(u_{M-N+1}) \cdots
B(u_M)|\Omega \rangle \nonumber \\
=&\sum_{\{x \}} \langle 1 \cdots M|B(u_1) \cdots
B(u_{M-N}) |\overline{x_1} \cdots \overline{x_{M-N}} \rangle \langle x_1 \cdots x_N|
B(u_{M-N+1}) \cdots
B(u_M)|\Omega \rangle,
\label{comparisontwo}
\end{align}
and use the correspondence
between the wavefunctions and the symmetric polynomials
\eqref{generalizedwavefunction} and \eqref{generalizedwavefunctionthree}
.
Combining the two ways of evaluations,
one gets the pairing formula.
\end{proof}

One can do the same analysis to give a pairing formula
between the symmetric polynomials
$\overline{G}_x(\{ u \})$ and $\overline{H}_{\overline{x}}(\{ u \})$
from the dual domain wall boundary partition $\overline{Z_N}(\{ u \}_N)$
\begin{align}
\overline{Z_N}(\{ u \}_N)&=\langle \Omega |C_N(u_N)
\cdots C_N(u_1)|1 \cdots N \rangle, \\
C_N(u)&={}_a \langle 1|L_{aN}(u) \cdots L_{a1}(u)|0 \rangle.
\end{align}
Again, we start by generalizing to the inhomogeneous
version
\begin{align}
\overline{Z_N}(\{ u \}_N|\{w\}_N)&=\langle \Omega |C_N(u_N|\{w\}_N)
\cdots C_N(u_1|\{w\}_N)|1 \cdots N \rangle, \\
C_N(u|\{w\}_N)&={}_a \langle 1|L_{aN}(u,w_N) \cdots L_{a1}(u,w_1)|0 \rangle.
\end{align}
We have the following determinant form.
\begin{theorem}
The inhomogeneous dual domain wall boundary partition function
$\overline{Z_N}(\{ u \}_N|\{ w \}_N)$ can be expressed as the
following determinant
\begin{align}
\overline{Z_N}(\{ u \}_N|\{ w \}_N)
=&\frac{\prod_{j=1}^N (1-t)dw_j \prod_{j,k=1}^N(atu_j+bw_k)(eu_j+tfw_k)}
{(t^2cd)^{N(N-1)/2} \prod_{1 \le j < k \le N}(u_j-u_k)(w_k-w_j)} \nonumber \\
&\times \mathrm{det}_N
\Bigg( \frac{1}{(atu_j+bw_k)(eu_j+tfw_k)} \Bigg).
\label{determinantinhomogeneousversion2}
\end{align}
\end{theorem}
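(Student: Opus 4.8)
The plan is to prove \eqref{determinantinhomogeneousversion2} by the same Izergin--Korepin argument that establishes the determinant form \eqref{determinantinhomogeneous} for $Z_N(\{u\}_N|\{w\}_N)$, now applied to the dual partition function $\overline{Z_N}(\{u\}_N|\{w\}_N)=\langle\Omega|C_N(u_N)\cdots C_N(u_1)|1\cdots N\rangle$. Since the $C$-operators form a commutative family as a consequence of the $RLL$ relation \eqref{RLL}, $\overline{Z_N}$ is symmetric in $u_1,\dots,u_N$, and the idea is to pin it down uniquely by a list of properties analogous to Proposition~\ref{propertiesfordomainwallboundarypartitionfunction}, and then to check that the right-hand side of \eqref{determinantinhomogeneousversion2} obeys the same list.

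First I would read off the characterizing properties of $\overline{Z_N}$ from its graphical description. The turning weight of the $C_N$-operator is the matrix element $(1-t)dw_j$ of the inhomogeneous $L$-operator \eqref{generalizedinhomogeneousloperator}, while the relevant frozen bulk weights are $atu_j+bw_k$ and $eu_j+tfw_k$. This gives the initial value $\overline{Z_1}(u_1|w_1)=(1-t)dw_1$ and shows that $\overline{Z_N}$ is a polynomial of degree $N-1$ in $u_N$, since along each row the unique turning vertex carries no $u$-dependence. Symmetry in $u_1,\dots,u_N$ is the commutativity of the $C$-operators noted above. The essential ingredient is the recursion: specializing $u_N$ to the value at which a corner vertex is forced by the ice rule freezes an entire row and column, so that the partition function factorizes into an explicit product times $\overline{Z_{N-1}}$ in the remaining variables, in direct analogy with \eqref{recursiondomain}. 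I would determine this specialization point and the accompanying product factor by tracing the frozen configuration and simplifying the resulting weights with the constraints \eqref{constraints2}.

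Next I would verify that the candidate determinant in \eqref{determinantinhomogeneousversion2} satisfies all four properties. The degree count and the $N=1$ check are immediate. Symmetry in the $u_j$ is not manifest but follows from the standard argument that the prefactor times the determinant, with the Vandermonde $\prod_{1\le j<k\le N}(u_j-u_k)$ placed in the denominator, is symmetric because an antisymmetric numerator is paired with an antisymmetric denominator. For the recursion I would expand the determinant along the row and column singled out by the specialization of $u_N$: the off-diagonal entries drop out exactly as in the proof of Theorem~\ref{inhomogeneoustheorem}, leaving a single cofactor that reproduces $\overline{Z_{N-1}}$, while the scalar prefactors combine---via the relations $be=-tcd$ and $af=-cd$ of \eqref{constraints2}, equivalently $be-at^2f=-t(1-t)cd$---into precisely the product factor found in the previous step. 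Since a polynomial of the stated degree that is symmetric in the $u_j$ and satisfies the recursion with the given initial value is unique, this proves the theorem.

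The main obstacle is this recursion step: pinning down the exact specialization value of $u_N$ and the precise weight prefactor it produces, and then showing that the cofactor expansion of \eqref{determinantinhomogeneousversion2} reproduces it identically. Here the asymmetric placement of the quantum-group parameter $t$ in the weights $atu+bw$ and $eu+tfw$ makes the bookkeeping delicate, and the constraints \eqref{constraints2} are indispensable for the cancellations. It is tempting to bypass all of this by substituting $a\mapsto at$ and $f\mapsto tf$ in the already-proven formula \eqref{determinantinhomogeneous}, which does send the matrix $\big((au_j+bw_k)(eu_j+fw_k)\big)^{-1}$ to $\big((atu_j+bw_k)(eu_j+tfw_k)\big)^{-1}$; however, this substitution violates \eqref{constraints2}, since it sends $cd+af=0$ to $cd+t^2af=cd(1-t^2)\neq0$, so it is only a mnemonic and cannot be invoked as a genuine specialization of the earlier theorem. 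The Izergin--Korepin verification must therefore be carried out independently for $\overline{Z_N}$.
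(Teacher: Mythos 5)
Your proposal is correct and follows essentially the route the paper intends: the paper gives no explicit proof of this theorem, saying only that ``one can do the same analysis'' as for $Z_N(\{u\}_N|\{w\}_N)$, namely the Izergin--Korepin characterization by polynomial degree, symmetry, initial condition and a freezing recursion, followed by verification that the candidate determinant satisfies the same list --- which is exactly your plan, down to the correct turning weight $(1-t)dw_j$ and initial value $\overline{Z_1}(u_1|w_1)=(1-t)dw_1$. Your observation that the substitution $a\mapsto at$, $f\mapsto tf$ is incompatible with the constraints \eqref{constraints2} (and moreover would not reproduce the prefactors $(1-t)dw_j$ and $(t^2cd)^{N(N-1)/2}$), so that the verification must be carried out independently, is accurate and worth making explicit.
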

By taking the homogeneous limit of the determinant
\eqref{determinantinhomogeneousversion2}, one gets
the following determinant form for $\overline{Z_N}(\{ u \}_N)$.
\begin{proposition} \label{homogeneousexpressionver2}
The homogeneous limit of the determinant representation
of the dual domain wall boundary partition function
is expressed as the following determinant
\begin{align}
\overline{Z_N}(\{ u \}_N)=\frac{\mathrm{det}_N
((eu_j+tf)^N(-b)^k(atu_j+b)^{N-k}-(atu_j+b)^N(-tf)^k(eu_j+tf)^{N-k})}
{t^{N^2} c^{N(N+1)/2} d^{N(N-1)/2} \prod_{j=1}^N u_j
\prod_{1 \le j < k \le N}(u_j-u_k)}.
\label{determinanthomogeneousver2}
\end{align}
\end{proposition}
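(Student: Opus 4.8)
The plan is to take the confluent homogeneous limit $w_1,\dots,w_N\to 1$ of the inhomogeneous determinant \eqref{determinantinhomogeneousversion2}, following the same route as the proof of Proposition~\ref{homogeneousexpression}. First I would isolate the object $\frac{1}{\prod_{1\le j<k\le N}(w_k-w_j)}\,\mathrm{det}_N\big(\tfrac{1}{(atu_j+bw_k)(eu_j+tfw_k)}\big)$ and rewrite each matrix element by partial fractions in $w_k$. The analogue of the relation $be-af=(1-t)cd$ used in the non-dual case is now $be-t^2af=-(1-t)tcd$, which follows directly from the constraints \eqref{constraints2} (i.e. $af=-cd$, $be=-tcd$). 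It gives
\begin{align}
\frac{1}{(atu_j+bw_k)(eu_j+tfw_k)}
=\frac{1}{-(1-t)tcd\,u_j}\Bigg(\frac{b}{atu_j+bw_k}-\frac{tf}{eu_j+tfw_k}\Bigg).
\end{align}
The coefficient $\frac{1}{-(1-t)tcd\,u_j}$ depends only on the row index $j$, so it pulls out of the determinant as $\big(-(1-t)tcd\big)^{-N}\prod_{j=1}^N u_j^{-1}$.

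Next I would take the limits $w_k\to 1$ successively. Dividing by the Vandermonde $\prod_{j<k}(w_k-w_j)$ converts the determinant into a confluent one whose $(j,k)$ entry is the coefficient of $(w-1)^{k-1}$ in the Taylor expansion about $w=1$ of $g_j(w):=\frac{b}{atu_j+bw}-\frac{tf}{eu_j+tfw}$. A direct expansion produces this coefficient as $(-1)^{k-1}\big(\frac{b^k}{(atu_j+b)^k}-\frac{(tf)^k}{(eu_j+tf)^k}\big)$, which is the exact counterpart of the entry obtained in Proposition~\ref{homogeneousexpression} under the substitutions $a\mapsto at$ and $f\mapsto tf$.

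Finally I would reassemble the surviving prefactors. In the limit $\prod_{j=1}^N(1-t)dw_j\to((1-t)d)^N$ and $\prod_{j,k=1}^N(atu_j+b)(eu_j+tf)\to\prod_{j=1}^N(atu_j+b)^N(eu_j+tf)^N$; absorbing the latter row by row into the determinant turns each entry into $-\big((eu_j+tf)^N(-b)^k(atu_j+b)^{N-k}-(atu_j+b)^N(-tf)^k(eu_j+tf)^{N-k}\big)$, i.e. precisely minus the entry appearing in \eqref{determinanthomogeneousver2}. Collecting the scalar factors $((1-t)d)^N$, $(t^2cd)^{-N(N-1)/2}$ and $\big(-(1-t)tcd\big)^{-N}$ yields $(-1)^N t^{-N^2}c^{-N(N+1)/2}d^{-N(N-1)/2}$, where I use $N+N(N-1)=N^2$ and $N+N(N-1)/2=N(N+1)/2$. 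The global $(-1)^N$ carried by the rewritten determinant (since every entry equals minus the target entry) cancels this $(-1)^N$, and \eqref{determinanthomogeneousver2} follows.

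The only genuinely delicate point is the confluent Vandermonde limit together with the accompanying sign and power bookkeeping: one must verify that the two independent $(-1)^N$ factors cancel and that the powers of $t$, $c$ and $d$ combine exactly into $t^{N^2}c^{N(N+1)/2}d^{N(N-1)/2}$. Once the partial-fraction identity $be-t^2af=-(1-t)tcd$ is in hand, the remainder is mechanical and structurally identical to the proof of Proposition~\ref{homogeneousexpression}.
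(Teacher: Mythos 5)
Your proposal is correct and follows exactly the route the paper intends: the paper omits the details for the dual case, merely asserting it follows by taking the homogeneous limit of \eqref{determinantinhomogeneousversion2}, and your argument is the faithful analogue of the paper's proof of Proposition \ref{homogeneousexpression} with the partial-fraction identity $be-t^2af=-(1-t)tcd$ replacing $be-af=(1-t)cd$. All the bookkeeping checks out, including the surviving $\prod_j u_j$ in the denominator (which, unlike the non-dual case, is not cancelled because the prefactor is now $\prod_j(1-t)dw_j$ rather than $\prod_j(1-t)cu_j$) and the cancellation of the two $(-1)^N$ factors.
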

By combining \eqref{determinanthomogeneousver2},
\eqref{generalizedwavefunctiontwo} and \eqref{generalizedwavefunctionfour},
one gets the following pairing formula.
\begin{theorem}
We have the following pairing formula between the
symmetric polynomials $\overline{G}_x(u_{M-N+1},\dots,u_M)$
and $\overline{H}_{\overline{x}}(u_1,\dots,u_{M-N})$
\begin{align}
&\sum_x \overline{H}_{\overline{x}}(u_1,\dots,u_{M-N})
\overline{G}_x(u_{M-N+1},\dots,u_M)
\nonumber \\
=&
\frac{\mathrm{det}_N
((eu_j+tf)^N(-b)^k(atu_j+b)^{N-k}-(atu_j+b)^N(-tf)^k(eu_j+tf)^{N-k})}
{t^{N^2} c^{N(N+1)/2} d^{N(N-1)/2} \prod_{j=1}^N u_j
\prod_{1 \le j < k \le N}(u_j-u_k)}. \label{pairingver2}
\end{align}
Here, for each term of the product between
$\overline{G}_x(u_{M-N+1},\dots,u_M)$ and
$\overline{H}_{\overline{x}}(u_1,\dots,u_{M-N})$,
the hole configuration $\overline{x}$ of
$\overline{H}_{\overline{x}}(u_1,\dots,u_{M-N})$ is the complementary part
of the particle configuration $x$ of $\overline{G}_x(u_{M-N+1},\dots,u_M)$.
That is, the particle configuration $x=\{ x_1,\dots, x_N \}$
and the hole configuration $\overline{x}=\{ \overline{x_1} \dots
\overline{x_{M-N}} \}$ forms a disjoint union of $\{1,2,\dots,N \}$,
$x \sqcup \overline{x}=\{1,2\dots,N \}$.
The sum in the left hand side of \eqref{pairingver2}
is over all particle configurations $x=(1 \le x_1 < x_2 < \cdots < x_N \le M)$.
\end{theorem}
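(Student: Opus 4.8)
The plan is to mirror exactly the proof of the preceding pairing formula between $G_x$ and $H_{\overline{x}}$, now working with the dual domain wall boundary partition function, which is assembled from $C$-operators instead of $B$-operators. The idea is to evaluate one and the same amplitude in two ways and equate the outcomes. The first evaluation is the closed determinant already recorded in Proposition \ref{homogeneousexpressionver2}, formula \eqref{determinanthomogeneousver2}, which furnishes the right-hand side of \eqref{pairingver2}. The second is combinatorial: one inserts a resolution of the identity between the $C$-operators and recognizes the two resulting amplitudes as dual wavefunctions, which Theorem \ref{theoremwavefunctions} identifies with $\overline{G}_x$ and $\overline{H}_{\overline{x}}$ through \eqref{generalizedwavefunctiontwo} and \eqref{generalizedwavefunctionfour}.

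Concretely, I would start from the amplitude $\langle \Omega| C(u_{M-N+1}) \cdots C(u_M) C(u_{M-N}) \cdots C(u_1)|1 \cdots M\rangle$ and insert the completeness relation $\sum_{\{ x \}} |x_1 \cdots x_N\rangle \langle x_1 \cdots x_N| = \mathrm{Id}$ over $N$-particle configurations between the two groups of $C$-operators, factorizing the amplitude as
\begin{align}
\sum_{\{ x \}} \langle \Omega| C(u_{M-N+1}) \cdots C(u_M)|x_1 \cdots x_N\rangle \, \langle x_1 \cdots x_N| C(u_{M-N}) \cdots C(u_1)|1 \cdots M\rangle .
\end{align}
By \eqref{generalizedwavefunctiontwo} the first factor equals $\overline{G}_x(u_{M-N+1},\dots,u_M)$. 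For the second factor I would invoke the complementarity of configurations: the $N$-particle bra $\langle x_1 \cdots x_N|$ coincides with the $(M-N)$-hole bra $\langle \overline{x_1} \cdots \overline{x_{M-N}}|$ whose holes occupy the complementary sites, so that \eqref{generalizedwavefunctionfour} identifies it with $\overline{H}_{\overline{x}}(u_1,\dots,u_{M-N})$. This reproduces the left-hand side of \eqref{pairingver2}.

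Since the $C$-operators commute among themselves---the same consequence of the $RLL$ relation \eqref{RLL} that yields $[B(u_j),B(u_k)]=0$---the particular ordering of the $C$-operators chosen above is immaterial and the amplitude equals the dual domain wall boundary partition function; equating it with the determinant \eqref{determinanthomogeneousver2} then gives \eqref{pairingver2}. I expect the only genuinely delicate point to be the bookkeeping rather than any new estimate: one must verify that applying $M-N$ $C$-operators to the fully occupied state $|1 \cdots M\rangle$ produces exactly an $N$-particle (equivalently $(M-N)$-hole) state, match this hole labeling with the particle labeling of the inserted identity, and split the $M$ spectral parameters into the two blocks $\{ u_{M-N+1},\dots,u_M \}$ and $\{ u_1,\dots,u_{M-N} \}$ consistently, so that the correspondences \eqref{generalizedwavefunctiontwo} and \eqref{generalizedwavefunctionfour} apply with the correct arguments. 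Once this particle--hole dictionary is fixed, the remaining steps are formally identical to those in the $G_x$--$H_{\overline{x}}$ pairing.
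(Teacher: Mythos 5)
Your proposal is correct and follows essentially the same route as the paper: the paper obtains \eqref{pairingver2} by evaluating the dual domain wall boundary partition function $\langle \Omega|C(u_M)\cdots C(u_1)|1\cdots M\rangle$ both via the homogeneous determinant \eqref{determinanthomogeneousver2} and via insertion of the completeness relation, identifying the two factors through \eqref{generalizedwavefunctiontwo} and \eqref{generalizedwavefunctionfour} exactly as you do. Your particle--hole bookkeeping and the use of the commutativity of the $C$-operators are the same ingredients the paper relies on.
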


\section{Branching formulas}
In this section, we establish branching formulas
for the symmetric polynomials as another application
of the correspondences.
We define four types of polynomials of $u$,
each of which will become the skew polynomials 
of the four symmetric polynomials
introduced in section 3.
We first introduce a notation for the
relation between two particle configurations.
\begin{definition}
For two increasing sequences of integers
$y_1, y_2, \dots, y_{N+1}$ $(y_1 < y_2 < \cdots < y_{N+1})$
and
$x_1, x_2, \dots, x_N$ $(x_1 < x_2 < \cdots < x_N)$,
we define the relation $y \succ x$
as $y_1 \le x_1 \le y_2 \le \cdots \le x_N \le y_{N+1}$.
\end{definition}

\begin{definition}
We define the following four types of polynomials in $u$. \\

(1) We define $G_{y,x}(u)$ as
\begin{align}
&G_{y,x}(u) \nonumber \\
=&((1-t)cu)^{k+1} ((1-t)d)^k
\prod_{j=1}^{k+1}
(atu+b)^{\#\{ x_\ell | p_j<x_\ell<q_j \}}
(au+b)^{q_j-p_j-1-\#\{ x_\ell | p_j<x_\ell<q_j \}} \nonumber \\
&\times
(eu+tf)^{\#\{ x_\ell | q_{j-1}<x_\ell<p_j \}}
(eu+f)^{p_j-q_{j-1}-1-\#\{ x_\ell | q_{j-1}<x_\ell<p_j \}},
\end{align}
for $y \succ x$, and 0 otherwise.
Here, we define $p_1, p_2, \dots, p_{k+1}$ as an increasing
sequence of $y_j$, $j=1,\dots,N+1$ satisfying $y_j \neq x_j, x_{j-1}$.
$q_1, q_2, \dots, q_{k}$ is defined as an increasing
sequence of $x_j$, $j=1,\dots,N$ satisfying $x_j \neq y_j, y_{j+1}$.
We also define $q_0:=0$, $q_{k+1}:=M+1$.
\\

(2) We define $H_{\overline{y},\overline{x}}(u)$ as
\begin{align}
&H_{\overline{y},\overline{x}}(u) \nonumber \\
=&((1-t)cu)^{k+1} ((1-t)d)^k
\prod_{j=1}^{k+1}
(au+b)^{\#\{ \overline{x_\ell} | \overline{p_j}<\overline{x_\ell}<\overline{q_j} \}}
(atu+b)^{\overline{q_j}-\overline{p_j}-1-
\#\{ \overline{x_\ell} | \overline{p_j}<\overline{x_\ell}<\overline{q_j} \}} \nonumber \\
&\times
(eu+f)^{\#\{ \overline{x_\ell} | \overline{q_{j-1}}<\overline{x_\ell}<\overline{p_j} \}}
(eu+tf)^{\overline{p_j}-\overline{q_{j-1}}-1-\#\{ \overline{x_\ell} | \overline{q_{j-1}}<\overline{x_\ell}<\overline{p_j} \}},
\end{align}
for $\overline{y} \succ \overline{x}$, and 0 otherwise.
Here, we define $\overline{p_1}, \overline{p_2}, \dots, \overline{p_{k+1}}$
as an increasing
sequence of $\overline{y_j}$, $j=1,\dots,N+1$ satisfying
$\overline{y_j} \neq \overline{x_j}, \overline{x_{j-1}}$.
$\overline{q_1}, \overline{q_2}, \dots, \overline{q_{k}}$
is defined as an increasing
sequence of $\overline{x_j}$, $j=1,\dots,N$ satisfying
$\overline{x_j} \neq \overline{y_j}, \overline{y_{j+1}}$.
We also define $\overline{q_0}:=0$, $\overline{q_{k+1}}:=M+1$.
\\

(3) We define $\overline{G}_{y,x}(u)$ as
\begin{align}
&\overline{G}_{y,x}(u) \nonumber \\
=&((1-t)d)^{k+1} ((1-t)cu)^k
\prod_{j=1}^{k+1}
(eu+tf)^{\#\{ x_\ell | r_j<x_\ell<s_j \}}
(eu+f)^{s_j-r_j-1-\#\{ x_\ell | r_j<x_\ell<s_j \}} \nonumber \\
&\times
(atu+b)^{\#\{ x_\ell | s_{j-1}<x_\ell<r_j \}}
(au+b)^{r_j-s_{j-1}-1-\#\{ x_\ell | s_{j-1}<x_\ell<r_j \}},
\end{align}
for $y \succ x$, and 0 otherwise.
Here, we define $r_1, r_2, \dots, r_{k+1}$ as an increasing
sequence of $y_j$, $j=1,\dots,N+1$ satisfying $y_j \neq x_j, x_{j-1}$.
$s_1, s_2, \dots, s_{k}$ is defined as an increasing
sequence of $x_j$, $j=1,\dots,N$ satisfying $x_j \neq y_j, y_{j+1}$.
We also define $s_0:=0$, $s_{k+1}:=M+1$.
\\

(4) We define $\overline{H}_{\overline{y},\overline{x}}(u)$ as
\begin{align}
&\overline{H}_{\overline{y},\overline{x}}(u) \nonumber \\
=&((1-t)d)^{k+1} ((1-t)cu)^k
\prod_{j=1}^{k+1}
(eu+f)^{\#\{ \overline{x_\ell} | \overline{r_j}<\overline{x_\ell}<\overline{s_j} \}}
(eu+tf)^{\overline{s_j}-\overline{r_j}-1-
\#\{ \overline{x_\ell} | \overline{r_j}<\overline{x_\ell}<\overline{s_j} \}} \nonumber \\
&\times
(au+b)^{\#\{ \overline{x_\ell} | \overline{s_{j-1}}<\overline{x_\ell}<\overline{r_j} \}}
(atu+b)^{\overline{r_j}-\overline{s_{j-1}}-1-\#\{ \overline{x_\ell} | \overline{s_{j-1}}<\overline{x_\ell}<\overline{r_j} \}},
\end{align}
for $\overline{y} \succ \overline{x}$, and 0 otherwise.
Here, we define $\overline{r_1}, \overline{r_2}, \dots, \overline{r_{k+1}}$
as an increasing
sequence of $\overline{y_j}$, $j=1,\dots,N+1$ satisfying
$\overline{y_j} \neq \overline{x_j}, \overline{x_{j-1}}$.
$\overline{s_1}, \overline{s_2}, \dots, \overline{s_{k}}$
is defined as an increasing
sequence of $\overline{x_j}$, $j=1,\dots,N$ satisfying
$\overline{x_j} \neq \overline{y_j}, \overline{y_{j+1}}$.
We also define $\overline{s_0}:=0$, $\overline{s_{k+1}}:=M+1$.
\end{definition}

\begin{proposition}
The matrix elements of the $B$-operators and $C$-operators
are given by the polynomials $G_{y,x}(u)$, $H_{\overline{y},\overline{x}}(u)$,
$\overline{G}_{y,x}(u)$ and $\overline{H}_{\overline{y},\overline{x}}(u)$.
\begin{align}
\langle y_1 \cdots y_{N+1}|B(u)|x_1 \cdots x_N \rangle
&=G_{y,x}(u), \label{matrixelementsone} \\
\langle \overline{x_1} \cdots \overline{x_N}|B(u)|
\overline{y_1} \cdots \overline{y_{N+1}} \rangle
&=H_{\overline{y},\overline{x}}(u), \label{matrixelementstwo} \\
\langle x_1 \cdots x_N|C(u)|y_1 \cdots y_{N+1} \rangle
&=\overline{G}_{y,x}(u), \label{matrixelementsthree} \\
\langle \overline{y_1} \cdots \overline{y_{N+1}}|C(u)|
\overline{x_1} \cdots \overline{x_N} \rangle
&=\overline{H}_{\overline{y},\overline{x}}(u) \label{matrixelementsfour}. 
\end{align}
\end{proposition}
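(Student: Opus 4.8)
The plan is to evaluate each of the four matrix elements as a single-row lattice partition function and to show that the ice rule freezes the internal auxiliary configuration completely, so that the matrix element collapses to a single monomial in the local $L$-weights. Concretely, since $B(u)={}_a\langle 0|L_{aM}(u)\cdots L_{a1}(u)|1\rangle_a$, the quantity $\langle y_1 \cdots y_{N+1}|B(u)|x_1 \cdots x_N\rangle$ is graphically a horizontal row of $M$ vertices carrying the weights of \eqref{generalizedloperator}, with the bottom column (quantum) edges fixed by the occupation pattern $x$, the top column edges fixed by $y$, and the two auxiliary ends of the row fixed to the incoming state $|1\rangle_a$ and the outgoing state $\langle 0|_a$. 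The matrix element is then the sum over the internal auxiliary edges of the product of the six local weights.

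The key step is to observe that the ice rule leaves no freedom in this sum. At each vertex the conservation law $\alpha+\beta=\gamma+\delta$ forces (auxiliary out) $=$ (auxiliary in) $+$ (bottom) $-$ (top), so propagating along the row from the boundary value $1$ determines the auxiliary spin on every edge. Writing $A_j$ for the auxiliary spin adjacent to the $j$-th vertex, one obtains
\begin{align}
A_j = 1 + \#\{\, x_\ell \ge j \,\} - \#\{\, y_\ell \ge j \,\},
\end{align}
and the configuration is admissible precisely when $A_j\in\{0,1\}$ for all $j$. I would then verify that this family of inequalities is equivalent to the interlacing relation $y\succ x$, which explains why $G_{y,x}(u)$ is set to $0$ off $y\succ x$. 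Since the admissible internal configuration is unique, the matrix element is a single product of local weights.

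It then remains to read off that product. On edges where $A_j=1$ the vertex weight is $eu+f$ over an empty column and $eu+tf$ over an occupied one; where $A_j=0$ it is $au+b$ and $atu+b$ respectively; a descent $1\to 0$ (a turn-up, creating a column particle) contributes $(1-t)cu$ and an ascent $0\to 1$ (a turn-down, absorbing a column particle) contributes $(1-t)d$. I would identify the turn-up sites with the $y_j$ satisfying $y_j\ne x_{j-1},x_j$, i.e.\ with the $p_j$, and the turn-down sites with the $x_j$ satisfying $x_j\ne y_j,y_{j+1}$, i.e.\ with the $q_j$; since $A$ starts at $1$ and ends at $0$ there is exactly one more turn-up than turn-down, giving the prefactor $((1-t)cu)^{k+1}((1-t)d)^k$. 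Counting occupied and empty columns inside each maximal constant-$A$ interval, namely $(p_j,q_j)$ where $A=0$ and $(q_{j-1},p_j)$ where $A=1$, with the conventions $q_0=0$ and $q_{k+1}=M+1$ covering the two ends of the row, reproduces exactly the four exponents in $G_{y,x}(u)$ and proves \eqref{matrixelementsone}.

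The remaining three identities follow by the same freezing argument with the roles of $0$ and $1$, and of the two auxiliary ends, adjusted accordingly. Passing to the all-occupied reference state $|1^M\rangle$ interchanges empty and occupied columns, turning the $G$-weights into the $H$-weights and yielding \eqref{matrixelementstwo}; replacing $B$ by $C$ reverses the auxiliary boundary condition to incoming $|0\rangle_a$ and outgoing $\langle 1|_a$, so that turn-downs now exceed turn-ups by one and the prefactor becomes $((1-t)d)^{k+1}((1-t)cu)^k$, producing \eqref{matrixelementsthree} and, in the hole basis, \eqref{matrixelementsfour}. I expect the main obstacle to be purely combinatorial bookkeeping rather than any analytic difficulty: verifying that admissibility coincides with $y\succ x$, that the coincidence conditions $y_j\ne x_{j-1},x_j$ and $x_j\ne y_j,y_{j+1}$ single out exactly the turning points of the frozen path, and that the weight count in each interval matches the stated exponents, including the boundary intervals governed by $q_0$ and $q_{k+1}$.
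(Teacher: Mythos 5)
Your proposal is correct and follows essentially the same route as the paper: both evaluate the matrix element as a single row of $L$-operators whose internal auxiliary edges are completely frozen by the ice rule, identify the turning points with the subsequences $p_j$ and $q_j$, and read off the product of local weights interval by interval (your treatment of admissibility $\Leftrightarrow y\succ x$ is in fact slightly more explicit than the paper's). The only blemish is a convention slip in your formula for $A_j$: with $B(u)={}_a\langle 0|L_{aM}(u)\cdots L_{a1}(u)|1\rangle_a$ the auxiliary spin entering site $j$ is $1+\#\{x_\ell<j\}-\#\{y_\ell<j\}$, which is the complement of the quantity you wrote, so your subsequent weight assignments ($eu+f$ versus $au+b$, etc.) are attached to the wrong value of $A_j$; this is immaterial to the argument but should be fixed in the bookkeeping.
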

\begin{proof}
We show \eqref{matrixelementsone} since the other relations
\eqref{matrixelementstwo}, \eqref{matrixelementsthree}
and \eqref{matrixelementsfour} can be shown in the same way.

First, note that due to the ice rule of
the $L$-operator of the six-vertex model
$[ L(u) ]_{\alpha \beta}^{\gamma \delta}=0$
unless $\alpha+\beta=\gamma+\delta$,
we only have to consider the following type of the matrix elements
$\langle y_1 \cdots y_{N+1}|B(u)|x_1 \cdots x_N \rangle$, i.e.,
the case when the total number of particles is increased by one after
the action of the $B$-operator
(we can immediately see
$\langle y_1 \cdots y_N|B(u)|x_1 \cdots x_N \rangle=0$
and
$\langle y_1 \cdots y_N|B(u)|x_1 \cdots x_{N+1} \rangle=0$
due to the ice rule).
Then one easily finds that
for the case of $\langle y_1 \cdots y_{N+1}|B(u)|x_1 \cdots x_N \rangle$,
one can define two increasing subsequences.
One of them, denoted as $p_1, p_2, \dots, p_{k+1}$, is defined as an increasing
sequence of $y_j$, $j=1,\dots,N+1$ satisfying $y_j \neq x_j, x_{j-1}$.
Another one denoted as $q_1, q_2, \dots, q_{k}$, is defined as an increasing
sequence of $x_j$, $j=1,\dots,N$ satisfying $x_j \neq y_j, y_{j+1}$.
We also define $q_0:=0$, $q_{k+1}:=M+1$ for later convenience.

Using these two increasing subsequences,
one can see that
the matrix elements of the $L$-operators
at the $p_1, p_2,\dots,p_{k+1}$-th sites constructing
$\langle y_1 \cdots y_{N+1}|B(u)|x_1 \cdots x_N \rangle$
are all \\
$[L(u)]_{10}^{01}=(1-t)cu$,
while the ones at the $q_1, q_2,\dots,q_k$-th sites
are all $[L(u)]_{01}^{10}=(1-t)d$.
From this consideration, one gets a factor
$((1-t)cu)^{k+1} ((1-t)d)^k$.

Let us now look at the matrix elements
of the $L$-operators at the other sites.
The matrix elements between
the $(p_j+1)$-th and $(q_j-1)$-th sites
are either $[L(u)]_{00}^{00}=au+b$
or $[L(u)]_{01}^{01}=atu+b$.
Taking into account the number of particles
whose positions are between $p_j+1$ and $q_j-1$, one finds
the contribution of the $L$-operators
from the $(p_j+1)$-th to $(q_j-1)$-th sites ($j=1,\dots,k+1$)
to the matrix elements of the $B$-operators is given by
$
(atu+b)^{\#\{ x_\ell | p_j<x_\ell<q_j \}}
(au+b)^{q_j-p_j-1-\#\{ x_\ell | p_j<x_\ell<q_j \}}
$
in total.

One can also do the same arguments to the matrix elements
between the $(q_{j-1}+1)$-th and $(p_j-1)$-th sites.
The matrix elements are either
$[L(u)]_{10}^{10}=eu+f$ or $[L(u)]_{11}^{11}=eu+tf$.
From the number of particles whose positions are between
$q_{j-1}+1$ and $p_j-1$, one gets the factor
$
(eu+tf)^{\#\{ x_\ell | q_{j-1}<x_\ell<p_j \}}
(eu+f)^{p_j-q_{j-1}-1-\#\{ x_\ell | q_{j-1}<x_\ell<p_j \}}
$ for each $j=1,\dots,k+1$.

Taking all factors into account, one gets the matrix elements
\begin{align}
&\langle y_1 \cdots y_{N+1}|B(u)|x_1 \cdots x_N \rangle \nonumber \\
=&((1-t)cu)^{k+1} ((1-t)d)^k
\prod_{j=1}^{k+1}
(atu+b)^{\#\{ x_\ell | p_j<x_\ell<q_j \}}
(au+b)^{q_j-p_j-1-\#\{ x_\ell | p_j<x_\ell<q_j \}} \nonumber \\
&\times
(eu+tf)^{\#\{ x_\ell | q_{j-1}<x_\ell<p_j \}}
(eu+f)^{p_j-q_{j-1}-1-\#\{ x_\ell | q_{j-1}<x_\ell<p_j \}}
=G_{y,x}(u).
\end{align}

\end{proof}

{\bf Example}
Let us check the case
$M=10$, $(x_1,x_2,x_3,x_4,x_5,x_6)=(2,4,5,6,8,10)$
and $(y_1,y_2,y_3,y_4,y_5,y_6,y_7)=(2,3,4,5,7,8,10)$.
From the configurations $x$ and $y$, we have $k=1$, $p_1=3$, $p_2=7$,
$q_0=0$, $q_1=6$, $q_2=11$.
We further calculate the numbers of the elements of the sets
$\#\{x_\ell|3<x_\ell<6 \}=2$, $\#\{x_\ell|7<x_\ell<11 \}=2$,
$\#\{x_\ell|0<x_\ell<3 \}=1$, $\#\{x_\ell|6<x_\ell<7 \}=0$
which contribute to the powers in the definition of $G_{y,x}(u)$.
From the datas calculated above, we get
$G_{y,x}(u)=((1-t)cu)^2 (1-t)d (atu+b)^4 (au+b) (eu+tf) (eu+f)$,
which matches exactly with the matrix elements of the $L$-operator
$\langle 2,3,4,5,7,8,10 |B(u)| 2,4,5,6,8,10 \rangle$
which can be calculated from its graphical description
and using the matrix elements of the $L$-operator
(see Figure \ref{picturematrixelements}).

\begin{figure}[ht]
\includegraphics[width=15cm]{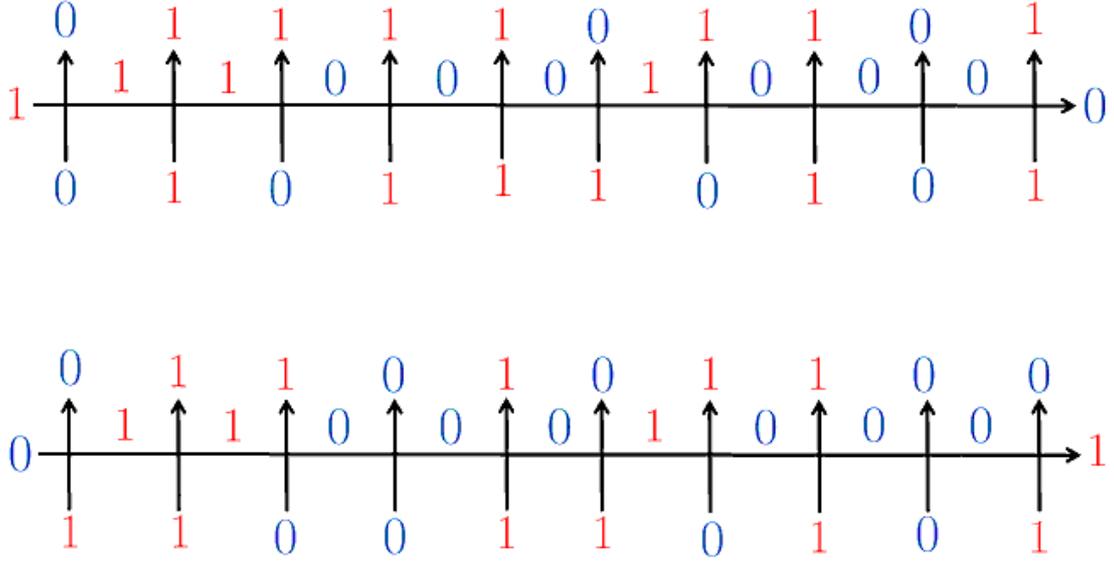}
\caption{Graphical representations of the matrix elements
$\langle 2,3,4,5,7,8,10 |B(u)| 2,4,5,6,8,10 \rangle$ (top)
and $\langle 2,3,5,7,8 |C(u)| 1,2,5,6,8,10 \rangle$ (bottom).
One can calculate from the above graphical description that
$\langle 2,3,4,5,7,8,10 |B(u)| 2,4,5,6,8,10 \rangle
=(eu+f) \times (eu+ft) \times (1-t)cu \times (atu+b) \times (atu+b)
\times (1-t)d \times (1-t)cu \times (atu+b) \times (au+b) \times (atu+b)
=((1-t)cu)^2 (1-t)d (atu+b)^4 (au+b) (eu+tf) (eu+f)$.
}
\label{picturematrixelements}
\end{figure}

\begin{theorem}
We have the branching formula for the symmetric polynomials
$G_x(u_1,\dots,u_N)$, $H_{\overline{x}}(u_1,\dots,u_N)$,
$\overline{G}_x(u_1,\dots,u_N)$ and
$\overline{H}_{\overline{x}}(u_1,\dots,u_N)$.
\begin{align}
G_y(u_1,\dots,u_N,u_{N+1})
&=\sum_{\substack{x \\ y \succ x}} G_{y,x}(u_{N+1})G_x(u_1,\dots,u_N),
\label{branchingformulaone}
\\
H_{\overline{y}}(u_1,\dots,u_N,u_{N+1})
&=\sum_{\substack{\overline{x} \\ \overline{y} \succ \overline{x}}} H_{\overline{y},\overline{x}}(u_{N+1})H_{\overline{x}}(u_1,\dots,u_N),
\label{branchingformulatwo}
\\
\overline{G}_y(u_1,\dots,u_N,u_{N+1})
&=\sum_{\substack{x \\ y \succ x}} \overline{G}_{y,x}(u_{N+1})
\overline{G}_x(u_1,\dots,u_N), \label{branchingformulathree}
\\
\overline{H}_{\overline{y}}(u_1,\dots,u_N,u_{N+1})
&=\sum_{\substack{\overline{x} \\ \overline{y} \succ \overline{x}}} \overline{H}_{\overline{y},\overline{x}}(u_{N+1})
\overline{H}_{\overline{x}}(u_1,\dots,u_N). \label{branchingformulafour}
\end{align}

\end{theorem}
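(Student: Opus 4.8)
The plan is to prove each branching formula by inserting a resolution of the identity between two adjacent creation (or annihilation) operators in the corresponding off-shell Bethe vector, and then to recognize the two resulting factors through the dictionaries already established: the operator/polynomial correspondence \eqref{matrixelementsone}--\eqref{matrixelementsfour} for a single $B$- or $C$-operator, and the wavefunction correspondence of Theorem \ref{theoremwavefunctions} for the remaining $N$-fold product. Concretely, for \eqref{branchingformulaone} I would start from \eqref{generalizedwavefunction} applied to the $(N+1)$-particle configuration $y$,
\begin{align}
G_y(u_1,\dots,u_{N+1})
=\langle y_1 \cdots y_{N+1}|B(u_{N+1})B(u_N)\cdots B(u_1)|\Omega\rangle,
\end{align}
and insert the completeness relation $\sum_{x}|x_1\cdots x_N\rangle\langle x_1\cdots x_N|=\mathrm{Id}$ over the $N$-particle sector between $B(u_{N+1})$ and $B(u_N)$.

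This factorizes the right-hand side as
\begin{align}
G_y(u_1,\dots,u_{N+1})
=\sum_{x}\langle y_1 \cdots y_{N+1}|B(u_{N+1})|x_1\cdots x_N\rangle
\langle x_1\cdots x_N|B(u_N)\cdots B(u_1)|\Omega\rangle.
\end{align}
The first factor is precisely the matrix element \eqref{matrixelementsone}, namely $G_{y,x}(u_{N+1})$, while the second factor is the $N$-particle wavefunction, equal to $G_x(u_1,\dots,u_N)$ by \eqref{generalizedwavefunction}. Since $G_{y,x}(u_{N+1})$ is defined to vanish unless $y\succ x$, the sum automatically collapses to configurations with $y\succ x$, reproducing \eqref{branchingformulaone}.

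The remaining three identities follow by the identical mechanism, the only changes being the choice of boundary vector, the particle/hole basis used in the resolution of the identity, and whether one peels off a $B$- or a $C$-operator. For \eqref{branchingformulatwo} I would use \eqref{generalizedwavefunctionthree} and insert the completeness relation over $N$-hole configurations $\sum_{\overline{x}}|\overline{x_1}\cdots\overline{x_N}\rangle\langle\overline{x_1}\cdots\overline{x_N}|$, identifying the two factors via \eqref{matrixelementstwo} and \eqref{generalizedwavefunctionthree}. For \eqref{branchingformulathree} and \eqref{branchingformulafour} I would begin from \eqref{generalizedwavefunctiontwo} and \eqref{generalizedwavefunctionfour}, where the annihilation operator $C(u_{N+1})$ stands at the innermost position and acts first on the $(N+1)$-particle (resp. hole) state; inserting the appropriate completeness relation just to its left and applying \eqref{matrixelementsthree}, \eqref{matrixelementsfour} together with \eqref{generalizedwavefunctiontwo}, \eqref{generalizedwavefunctionfour} yields the claimed formulas.

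Because all the genuinely analytic input — both the single-operator dictionary \eqref{matrixelementsone}--\eqref{matrixelementsfour} and Theorem \ref{theoremwavefunctions} — is already in hand, the argument is largely bookkeeping. The one point demanding real care, which I regard as the main obstacle, is the consistency of conventions: one must check that the spectral-parameter labeling in \eqref{statevector}--\eqref{statevectorfour} places $u_{N+1}$ on exactly the operator adjacent to the inserted identity (the outermost $B$ in the particle cases, the innermost $C$ in the dual cases), so that the single-operator factor is evaluated at $u_{N+1}$ and the surviving wavefunction depends only on $u_1,\dots,u_N$. Verifying this ordering in each of the four cases, and matching the particle-versus-hole resolution of the identity to the correct boundary vector, is precisely what makes the four formulas emerge with the stated arguments and with the $\succ$-restriction built in.
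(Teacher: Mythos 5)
Your proposal is correct and coincides with the paper's own argument: inserting the resolution of the identity over $N$-particle (or $N$-hole) states next to the operator carrying $u_{N+1}$ and invoking \eqref{generalizedwavefunction}--\eqref{generalizedwavefunctionfour} together with \eqref{matrixelementsone}--\eqref{matrixelementsfour} is exactly the paper's computation, phrased there as expanding $\prod_{j=1}^{N+1}B(u_j)|\Omega\rangle$ in the particle basis and equating coefficients of $|y_1\cdots y_{N+1}\rangle$. The only cosmetic slip is in your description of \eqref{branchingformulafour}, where $C(u_{N+1})$ sits adjacent to the bra $\langle \overline{y_1}\cdots\overline{y_{N+1}}|$ rather than at the innermost position, but this does not affect the argument.
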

\begin{proof}
We show \eqref{branchingformulaone}.
We use the argument in \cite{MS2} which was used for the case of
the Grothendieck polynomials.
This follows by using \eqref{generalizedwavefunction}
and \eqref{matrixelementsone}
to calculate the action of ($N+1$) $B$-operators
on the vacuum state $|\Omega \rangle$ as
\begin{align}
\prod_{j=1}^{N+1} B(u_j)|\Omega \rangle
&=B(u_{N+1}) \prod_{j=1}^N B(u_j)|\Omega \rangle \nonumber \\
&=B(u_{N+1}) \sum_x G_x(u_1,\dots,u_N)|x_1 \cdots x_N \rangle \nonumber \\
&=\sum_{y \succ x}G_{y,x}(u_{N+1})
G_x(u_1,\dots,u_N)|y_1 \cdots y_{N+1} \rangle, \label{forbranchingformulaone}
\end{align}
on one hand, and comparing it with the direct evaluation
\begin{align}
\prod_{j=1}^{N+1} B(u_j)|\Omega \rangle=
\sum_y G_y(u_1,\dots,u_{N+1})|y_1 \cdots y_{N+1} \rangle.
\label{forbranchingformulatwo}
\end{align}
Equating the coefficients of the vectors
$|y_1 \cdots y_{N+1} \rangle$ in
the right hand sides of \eqref{forbranchingformulaone}
and \eqref{branchingformulatwo} gives
the branching formula \eqref{branchingformulaone}.
The other branching formulas
\eqref{branchingformulatwo}, \eqref{branchingformulathree}
and \eqref{branchingformulafour} van be proved in the same way.
\end{proof}

\section{Conclusion}
In this paper, we studied the combinatorial properties
of certain classes of symmetric polynomials from the viewpoint
of integrable lattice models in finite lattice.
We introduced an integrable six-vertex model
whose $L$-operator is the most general form intertwined by the
$U_q(sl_2)$ $R$-matrix,
and analyzed the correspondence between the wavefunctions
and the symmetric polynomials.
The symmetric polynomials can be regarded as a generalization
of the Grothendieck polynomials
since taking the quantum group parameter to zero,
the symmetric polynomials reduce to the Grothendieck polynomials.
We proved the correspondence by combining the matrix product method and
an expression for the homogeneous domain wall boundary partititon function.
We remark that similar results
for \eqref{generalizedwavefunction} in Theorem \ref{theoremwavefunctions}
have been obtained for the case of $q$-boson models
\cite{MS2,Bogo,Ts,Bor,BP1,BP2,WZ} with fewer free parameters 
(except the inhomogeneous parameters)
than the vertex model treated in this paper.
It is interesting to find the corresponding $q$-boson model
which is the counterpart of the spin-1/2 vertex model in this paper.
A special case of the correspondence between
the wavefunctions of the boson model and the spin-1/2 vertex model
is given in \cite{MS2}.

Based on the correspondence, we examined several combinatorial
properties of the symmetric polynomials.
By taking the homogeneous limit of the Izergin-Korepin
determinant form of the domain wall boundary partition functions,
we extracted determinant pairing formulas for the symmetric polynomials
introduced in this paper. 
The domain wall boundary partition function was used in the enumeration
of the alternating sign matrices by taking limits of
both the spectral and inhomogeneous parameters
\cite{Br,Kuperberg1,Kuperberg2}.
In this paper,
we use the domain wall boundary partition function 
to extract pairing formulas between the symmetric polynomials.
We just take the limit of the inhomogeneous paramaters
and keeping the spectral parameters as they are.

By computing the matrix elements
of the $B$- and $C$-operators explicitly, we also derived branching formulas
for the symmetric polynomials.
This is a direct consequence of the correpondence
between the wavefunctions and the symmetric polynomials.

The combinatorial properties investigated in this paper
holds for any value of the quantum group parameter $t$.
By restricting the quantum group parameter to $t=0$ or $t=-1$,
one can prove more combinatorial identities \cite{MS,MS3}
such as the Cauchy identity for the Grothendieck polynomials.
It is interesting to find more combinatorial and algebraic identities
by using the quantum inverse scattering method
for the case either $t$ generic or by restricting to
special values of $t$, when $t$ are roots of unity for example.

It is interesting to apply the analysis done in this paper
to other models and other boundary conditions.
One typical example is the reflecting boundary condition.
The emerging symmetric polynomials
change from the Schur polynomials to the symplectic Schur polynomials,
or from the Hall-Littlewood polynomials to the 
$BC$-type versions for some integrable vertex and boson models
\cite{WZ,vanDE,Iv,BBCG}. It is natural to expect that
such kind of changes will also occur for the case
of the integrable model treated in this paper.

\section*{Acknowledgments}
This work was partially supported by grant-in-Aid
for Research Activity start-up No. 15H06218
and Scientific Research (C) No. 16K05468.

\end{document}